\newcommand{\Z}{{\mathbb Z}}
\newcommand{\R}{{\mathbb R}}
\newcommand{\C}{{\mathbb C}}
\newcommand{\N}{{\mathbb N}}
\newtheorem{lemma}{Lemma}[section]
\newtheorem{theorem}[lemma]{Theorem}
\newtheorem{prop}[lemma]{Proposition}
\newtheorem{coro}[lemma]{Corollary}
\newtheorem{defi}[lemma]{Definition}
\newtheorem*{conj}{Kotani-Last Conjecture}
\newtheorem*{conj2}{Kotani-Last Conjecture -- Dynamical Formulation}
\newcommand{\be}{\begin{equation}}
\newcommand{\ee}{\end{equation}}
\newcommand{\ol}{\overline}
\newcommand{\ti}{\tilde}
\DeclareMathOperator{\supp}{supp}
\numberwithin{equation}{section}
\begin{document}

\title[Schr\"odinger Operators defined by IET's]{Schr\"odinger Operators defined by Interval Exchange Transformations}

\author[J.\ Chaika]{Jon Chaika}

\address{Department of Mathematics, Rice University, Houston, TX~77005, USA}

\email{Jonathan.M.Chaika@rice.edu}

\author[D.\ Damanik]{David Damanik}

\address{Department of Mathematics, Rice University, Houston, TX~77005, USA}

\email{damanik@rice.edu}

\author[H.\ Kr\"uger]{Helge Kr\"uger}

\address{Department of Mathematics, Rice University, Houston, TX~77005, USA}

\email{helge.krueger@rice.edu}

\thanks{D.\ D.\ was supported in part by NSF grants DMS--0653720 and DMS--0800100.}
\thanks{H.\ K.\ was supported by NSF grant DMS--0800100.}
\thanks{Journal of Modern Dynamics \textbf{3:2}, 253-270 (2009)}

\date{\today}

\keywords{Ergodic Schr\"odinger operators, singular spectrum,
continuous spectrum, interval exchange transformations}
\subjclass[2000]{Primary 81Q10; Secondary  37A05, 47B36, 82B44}

\begin{abstract}
We discuss discrete one-dimensional Schr\"odinger operators whose
potentials are generated by an invertible ergodic transformation
of a compact metric space and a continuous real-valued sampling
function. We pay particular attention to the case where the
transformation is a minimal interval exchange transformation.
Results about the spectral type of these operators are
established. In particular, we provide the first examples of
transformations for which the associated Schr\"odinger operators
have purely singular spectrum for every non-constant continuous
sampling function.
\end{abstract}

\maketitle

\section{Introduction}\label{s.int}

Consider a probability space $(\Omega,\mu)$ and an invertible
ergodic transformation $T : \Omega \to \Omega$. Given a bounded
measurable sampling function $f : \Omega \to \R$, one can consider
discrete one-dimensional Schr\"odinger operators acting on $\psi
\in \ell^2(\Z)$ as
\begin{equation}\label{f.oper}
[H_\omega \psi](n) = \psi(n+1) + \psi(n-1) + V_\omega(n) \psi(n),
\end{equation}
where $\omega \in \Omega$, $n \in \Z$, and
\begin{equation}\label{f.pot}
V_\omega(n) = f(T^n \omega).
\end{equation}
Clearly, each $H_\omega$ is a bounded self-adjoint operator on
$\ell^2(\Z)$.

It is often convenient to further assume that $\Omega$ is a
compact metric space and $\mu$ is a Borel measure. In fact, one
can essentially force this setting by mapping $\Omega \ni \omega
\mapsto V_\omega \in \tilde \Omega$, where $\tilde \Omega$ is an
infinite product of compact intervals. Instead of $T$, $\mu$, and
$f$ one then considers the left shift, the push-forward of $\mu$,
and the function that evaluates at the origin. In the topological
setting, it is natural to consider continuous sampling functions.

A fundamental result of Pastur \cite{p} and Kunz-Souillard
\cite{ks} shows that there are $\Omega_0 \subseteq \Omega$ with
$\mu(\Omega_0) = 1$ and $\Sigma$, $\Sigma_\mathrm{pp}$,
$\Sigma_\mathrm{sc}$, $\Sigma_\mathrm{ac} \subset \R$ such that
for $\omega \in \Omega_0$, we have $\sigma(H_\omega) = \Sigma$,
$\sigma_\mathrm{pp}(H_\omega) = \Sigma_\mathrm{pp}$,
$\sigma_\mathrm{sc}(H_\omega) = \Sigma_\mathrm{sc}$, and
$\sigma_\mathrm{ac}(H_\omega) = \Sigma_\mathrm{ac}$. Here,
$\sigma(H)$, $\sigma_\mathrm{pp}(H)$, $\sigma_\mathrm{sc}(H)$, and
$\sigma_\mathrm{ac}(H)$ denote the spectrum, the pure-point
spectrum, the singular continuous spectrum, and the absolutely
continuous spectrum of $H$, respectively.

A standard direct spectral problem is therefore the following:
given the family $\{H_\omega\}_{\omega \in \Omega}$, identify the
sets $\Sigma$, $\Sigma_\mathrm{pp}$, $\Sigma_\mathrm{sc}$,
$\Sigma_\mathrm{ac}$, or at least determine which of them are
non-empty. There is a large literature on questions of this kind
and the area has been especially active recently; we refer the
reader to the survey articles \cite{d,j} and references therein.

Naturally, inverse spectral problems are of interest as well.
Generally speaking, given information about one or several of the
sets $\Sigma$, $\Sigma_\mathrm{pp}$, $\Sigma_\mathrm{sc}$,
$\Sigma_\mathrm{ac}$, one wants to derive information about the
operator family $\{H_\omega\}_{\omega \in \Omega}$. There has not
been as much activity on questions of this kind. However, a certain
inverse spectral problem looms large over the general area of
one-dimensional ergodic Schr\"odinger operators:

\begin{conj}
If $\Sigma_\mathrm{ac} \not= \emptyset$, then the potentials
$V_\omega$ are almost periodic.
\end{conj}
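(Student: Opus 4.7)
My plan begins by invoking Kotani theory, which is the only known serious handle on the hypothesis $\Sigma_\mathrm{ac} \neq \emptyset$. Kotani's theorem identifies the essential closure of $\Sigma_\mathrm{ac}$ with the set $\{E \in \R : L(E) = 0\}$, where $L$ is the Lyapunov exponent; positivity of Lebesgue measure of this set then forces the potentials $V_\omega$ to be \emph{deterministic}, meaning that (for $\mu$-a.e.\ $\omega$) the restriction $V_\omega|_{\{n \geq 0\}}$ is measurably determined by $V_\omega|_{\{n < 0\}}$. So the first step in the plan is this reduction: replace the spectral hypothesis by the dynamical conclusion that the process $(V_\omega(n))_{n \in \Z}$ is a nondeterministic-in-neither-direction ergodic process, i.e.\ has trivial past/future tail relative to each half-line.

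The second step is to pass from $(\Omega,T,\mu)$ to the topological hull $\tilde\Omega = \overline{\{V_{T^n\omega} : n \in \Z\}} \subset \R^\Z$, using the map $\omega \mapsto V_\omega$ mentioned in the introduction, and to reformulate almost periodicity of $V_\omega$ as the statement that the shift action on $\tilde\Omega$ is equicontinuous, equivalently that $\tilde\Omega$ is a compact abelian topological group on which the shift acts by translation. So the target is: determinism of the process implies that the shift on the hull is equicontinuous. A natural route is to try to show that the two-sided conditional expectation structure coming from Kotani is in fact continuous, so that small perturbations of $V_\omega|_{\{n<0\}}$ yield small perturbations of $V_\omega|_{\{n\geq 0\}}$ uniformly along the orbit, which together with compactness of $\tilde\Omega$ would give equicontinuity and hence almost periodicity via the Halmos--von Neumann theorem.

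The main obstacle, and the reason the statement is a conjecture rather than a theorem, is exactly this upgrade from measurable determinism to topological rigidity. Kotani's conclusion is a statement about a measurable factor map between $\sigma$-algebras; it carries no a priori modulus of continuity, and there is no general mechanism known that converts ``the future is a measurable function of the past'' into ``small past perturbations cause small future perturbations'' uniformly over the hull. Weakly mixing base dynamics (such as a typical interval exchange transformation, the setting of this paper) are precisely the systems where one expects this rigidity to fail so violently that $\Sigma_\mathrm{ac}$ is simply empty.

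Given this, the realistic line of attack---and the template followed later in the paper---is to argue the contrapositive in special cases rather than attempt the full implication. Concretely, I would fix a class of transformations $T$ that is visibly far from almost periodic (e.g.\ weakly mixing interval exchanges, or more generally systems with no non-constant continuous eigenfunctions) and for those $T$ rule out $\Sigma_\mathrm{ac} \neq \emptyset$ directly, by using Kotani's support theorem together with the dynamical properties of $T$ to show that the Lyapunov exponent cannot vanish on a set of positive Lebesgue measure for any non-constant continuous $f$. This turns the Kotani--Last conjecture, in the restricted setting, into a concrete statement about positivity of Lyapunov exponents, which is the form in which partial progress is accessible.
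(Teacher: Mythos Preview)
The statement in question is a \emph{conjecture}, not a theorem: the paper does not prove it and explicitly presents it as an open problem (the Kotani--Last Conjecture), so there is no proof in the paper against which to compare your proposal. You recognize this yourself when you write ``the reason the statement is a conjecture rather than a theorem,'' and your diagnosis of the obstacle---that Kotani theory yields only measurable determinism, with no mechanism to upgrade this to the topological equicontinuity needed for almost periodicity---is accurate and matches the state of the art.

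Your final paragraph, proposing to argue the contrapositive in restricted settings (fix a class of non-almost-periodic $T$ and show $\Sigma_\mathrm{ac} = \emptyset$ directly), is precisely the strategy the paper pursues for interval exchange transformations. One refinement worth noting: the paper's actual mechanism is not positivity of Lyapunov exponents per se, but rather Kotani's \emph{continuous extension theorem} (Theorem~\ref{t.kotani1}), which says that when $\Sigma_\mathrm{ac} \neq \emptyset$ the past-to-future extension map on $\supp\tilde\mu$ is continuous. The paper then exhibits, using the discontinuities of $T$, pairs of points whose pasts converge together but whose futures stay apart, contradicting this continuity. So the relevant Kotani input is already a topological statement, not merely a measurable one---the gap you identify is real for the full conjecture, but in the paper's setting the discontinuity of $T$ supplies exactly the leverage needed to exploit the continuous (not just measurable) determinism that Kotani provides.
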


A function $V : \Z \to \R$ is called almost periodic if the set of
its translates $V_m(\cdot) = V(\cdot - m)$ is relatively compact
in $\ell^\infty(\Z)$. The Kotani-Last conjecture has been around
for at least two decades and it has been popularized by several
people, most recently by Damanik \cite[Problem~2]{d}, Jitomirskaya
\cite[Problem~1]{j}, and Simon \cite[Conjecture~8.9]{s}. A weaker
version of the conjecture is obtained by replacing the assumption
$\Sigma_\mathrm{ac} \not= \emptyset$ with $\Sigma_\mathrm{pp} \cup
\Sigma_\mathrm{sc} = \emptyset$. A proof of either version of the
conjecture would be an important result.

There is an equivalent formulation of the Kotani-Last conjecture
in purely dynamical terms. Suppose $\Omega$, $\mu$, $T$, $f$ are
as above. For $E \in \R$, consider the map
$$
A_E : \Omega \to \mathrm{SL}(2,\R) , \quad \omega \mapsto
\begin{pmatrix} E - f(\omega) & -1 \\ 1 & 0 \end{pmatrix}.
$$
This gives rise to the one-parameter family of
$\mathrm{SL}(2,\R)$-cocycles, $(T,A_E) : \Omega \times \R^2 \to
\Omega \times \R^2$, $(\omega,v) \mapsto (T \omega , A_E(\omega)
v)$. For $n \in \Z$, define the matrices $A_E^n$ by $(T,A_E)^n =
(T^n , A_E^n)$. Kingman's subadditive ergodic theorem ensures the
existence of $L(E) \ge 0$, called the Lyapunov exponent, such that
$
L(E) = \lim_{n \to \infty} \frac{1}{n} \int \log \|A_E^n(\omega)\|
\, d\mu(\omega).
$
Then, the following conjecture is equivalent to the one above;
see, for example, \cite[Theorem~4]{d}.
\begin{conj2}
Suppose the potentials $V_\omega$ are not almost periodic. Then,
$L(E) > 0$ for Lebesgue almost every $E \in \R$.
\end{conj2}

The existing evidence in favor of the Kotani-Last conjecture is
that its claim is true in the large number of explicit examples
that have been analyzed. On the one hand, there are several
classes of almost periodic potentials which lead to (purely)
absolutely continuous spectrum. This includes some limit-periodic
cases as well as some quasi-periodic cases. It is often helpful to
introduce a coupling constant and then work in the small coupling
regime. For example, if $\alpha$ is Diophantine, $T$ is the
rotation of the circle by $\alpha$, $g$ is real-analytic, and $f =
\lambda g$, then for $\lambda$ small enough, $\Sigma_\mathrm{pp}
\cup \Sigma_\mathrm{sc} = \emptyset$; see Bourgain-Jitomirskaya
\cite{bj} and references therein for related results. On the other
hand, all non-almost periodic operator families that have been
analyzed are such that $\Sigma_\mathrm{ac} = \emptyset$. For
example, if $\alpha$ is irrational and $T$ is the rotation of the
circle by $\alpha$, then the potentials $V_\omega(n) = f(\omega +
n\alpha)$ are almost periodic if and only if $f$ is continuous ---
and, indeed, Damanik and Killip have shown that
$\Sigma_\mathrm{ac} = \emptyset$ for discontinuous $f$ (satisfying
a weak additional assumption) \cite{dk}.

In particular, we note how the understanding of an inverse
spectral problem can be advanced by supplying supporting evidence
on the direct spectral problem side.

This point of view will be taken and developed further in this
paper. We wish to study operators whose potentials are not almost
periodic, but are quite close to being almost periodic in a sense
to be specified, and prove $\Sigma_\mathrm{ac} = \emptyset$. In
some sense, we will study a question that is dual to the
Damanik-Killip paper. Namely, what can be said when $f$ is nice
but $T$ is not? More precisely, can discontinuity of $T$ be
exploited in a similar way as discontinuity of $f$ was exploited
in \cite{dk}?

The primary example we have in mind is when the transformation $T$
is an interval exchange transformation. Interval exchange
transformations are important and extensively studied dynamical
systems. An interval exchange transformation is obtained by
partitioning the unit interval into finitely many half-open
subintervals and permuting them. Rotations of the circle
correspond to the exchange of two intervals. Thus, interval
exchange transformations are natural generalizations of circle
rotations. Note, however, that they are in general discontinuous.
Nevertheless, interval exchange transformations share certain
basic ergodic properties with rotations. First, every irrational
rotation is minimal, that is, all its orbits are dense. Interval
exchange transformations are minimal if they obey the Keane
condition, which requires that the orbits of the points of
discontinuity are infinite and mutually disjoint, and which holds
under a certain kind of irrationality assumption and in particular
in Lebesgue almost all cases. Secondly, every irrational rotation
is uniquely ergodic. This corresponds to the Masur-Veech result
that (for fixed irreducible permutation) Lebesgue almost every
interval exchange transformation is uniquely ergodic; see
\cite{m,v}. A major difference between irrational rotations and a
typical interval exchange transformation that is not a rotation is
the weak mixing property. While irrational rotations are never
weakly mixing, Avila and Forni have shown that (for every
permutation that does not correspond to a rotation) almost every
interval exchange transformation is weakly mixing \cite{af}.

This leads to an interesting general question: If the
transformation $T$ is weakly mixing, can one prove
$\Sigma_\mathrm{ac} = \emptyset$ for most/all non-constant $f$'s?
The typical interval exchange transformations form a prominent
class of examples. For these we are able to use weak mixing to
prove the absence of absolutely continuous spectrum for Lipschitz
functions:

\begin{theorem}\label{t.sample2}
Suppose $T$ is an interval exchange transformation that satisfies
the Keane condition and is weakly mixing. Then, for every
non-constant Lipschitz continuous $f$ and every $\omega$, we have
$\sigma_\mathrm{ac}(H_\omega) = \emptyset$.
\end{theorem}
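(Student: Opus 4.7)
I would proceed by contradiction, showing that $\Sigma_\mathrm{ac} \ne \emptyset$ is incompatible with the hypothesized dynamical structure. Standard ergodic Schr\"odinger operator theory (Pastur/Kunz--Souillard, together with a Last--Simon type semicontinuity argument exploiting minimality of the IET) reduces the assumption $\sigma_\mathrm{ac}(H_\omega) \ne \emptyset$ for some $\omega$ to $\Sigma_\mathrm{ac} \ne \emptyset$. Kotani's theorem then produces a set $Z \subset \R$ of positive Lebesgue measure on which $L(E) = 0$, together with the reflectionless relation
\[
m_-(E + i 0, \omega) \,=\, -\overline{m_+(E + i 0, \omega)} \qquad \text{for a.e.\ } E \in Z,
\]
valid for $\mu$-a.e.\ $\omega$, where $m_\pm$ are the half-line Weyl $m$-functions. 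Since $m_+$ is determined by $V_\omega|_{[0,\infty)}$ and $m_-$ by $V_\omega|_{(-\infty,-1]}$, this identity rigidly couples the forward and backward halves of the potential; by continuity of $m_\pm$ as a function of the potential on finite windows, the identity passes to the topological hull $\widehat\Omega \subset \R^\Z$ of the process.

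I would then construct two hull elements $\widehat V^{+}, \widehat V^{-} \in \widehat\Omega$ satisfying $\widehat V^+(n) = \widehat V^-(n)$ for every $n \ge 0$ but $\widehat V^+(-1) \ne \widehat V^-(-1)$, directly contradicting the reflectionless identity in the hull. Since $T$ is weakly mixing it is not a $2$-IET (i.e.\ not a rotation), and one may pick a discontinuity $\gamma$ of $T^{-1}$ at which $T$ itself is continuous. The Keane condition then ensures that the forward $T$-orbit of $\gamma$ avoids every discontinuity of $T$, so $T^n$ is continuous at $\gamma$ for all $n \ge 0$. Setting $\omega_k^\pm := \gamma \pm 1/k$, continuity of $f$ yields $V_{\omega_k^+}(n) - V_{\omega_k^-}(n) \to 0$ for every fixed $n \ge 0$, whereas $V_{\omega_k^\pm}(-1) \to f(T^{-1}\gamma^\pm)$ with $T^{-1}\gamma^+ \ne T^{-1}\gamma^-$ since $\gamma$ is a discontinuity of $T^{-1}$. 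A diagonal extraction produces the required hull pair $\widehat V^\pm$, provided $f$ separates these two one-sided limits.

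The main obstacle, and where the weak mixing hypothesis is crucial, is the residual case in which $f(T^{-1}\gamma^+) = f(T^{-1}\gamma^-)$ for \emph{every} discontinuity $\gamma$ of $T^{-1}$. Here I would iterate the above construction at the discontinuities of $T^{-N}$ for $N \ge 2$: since $T$ is a piecewise isometry, the one-sided limits at such a discontinuity are translates of one of the finitely many basic pairs $(T^{-1}\gamma_j^+, T^{-1}\gamma_j^-)$ by shifts lying along a $T$-orbit that is dense by minimality. Assuming $f$ agrees at every iterated pair yields a dense family of coincidence relations $f(x) = f(x + \delta)$ on $[0,1)$ for specific shifts $\delta$, which Lipschitz continuity promotes to genuine periodicity. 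Weak mixing of $T$ rules out any nontrivial measurable eigenfunction and, consequently, any such periodicity of $f$ compatible with the IET's translation data other than the trivial one, forcing $f$ to be constant and contradicting the non-constancy hypothesis. Carrying out this last implication carefully is the technical heart of the proof; the remaining ingredients (Kotani reflectionlessness, Keane condition, Lipschitz continuity of $f$) are standard.
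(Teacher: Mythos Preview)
Your reduction via Kotani determinism (phrased through the reflectionless identity for $m_\pm$) to the dynamical statement ``for some $N$, the map $f\circ T^{-N}$ has a genuine jump'' is essentially the paper's Theorem~\ref{t.generalthm}, just with the time direction reversed; that part is fine.

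The gap is in your handling of the residual case. You claim that if $f$ agrees across the one-sided limits at every discontinuity of every $T^{-N}$, one obtains ``a dense family of coincidence relations $f(x)=f(x+\delta)$'' which Lipschitz continuity upgrades to periodicity, and that weak mixing then excludes this via the absence of eigenfunctions. But the relations you actually obtain are $f(T^{-m}a_j)=f(T^{-m}b_j)$ for specific point pairs $(a_j,b_j)$ and $m\ge 0$---not translation identities valid for all $x$. There is no evident mechanism by which Lipschitz continuity converts such discrete coincidences into a global period, nor any clear way to manufacture a measurable eigenfunction from them. This is precisely the heart of the matter, and your sketch does not supply a working argument here. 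The paper's resolution (Theorem~\ref{t.lip}) is quite different and avoids eigenfunctions entirely: one first observes that continuity of every $f\circ T^n$ together with the piecewise-isometry property forces all the $f\circ T^n$ to share the \emph{same} Lipschitz constant $k$ as $f$ (the jump contributions in a telescoping estimate across the discontinuities vanish). This uniform equicontinuity means nearby points in $\Omega$ have $f$-values that stay uniformly close along the entire orbit, so a positive-measure tube around the diagonal in $\Omega\times\Omega$ is never mapped by $(T\times T)^n$ into a fixed positive-measure off-diagonal set; hence $T\times T$ is not ergodic, contradicting weak mixing via its product-ergodicity characterization.
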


For more general continuous functions, it is rather the
discontinuity of an interval exchange transformation that we
exploit and not the weak mixing property. We regard it as an
interesting open question whether the Lipschitz assumption in
Theorem~\ref{t.sample2} can be removed and the conclusion holds
for every non-constant continuous $f$.

Our results will include the following theorem, which we state
here because its formulation is simple and it identifies
situations where the absence of absolutely continuous spectrum
indeed holds for every non-constant continuous $f$.

\begin{theorem}\label{t.sample}
Suppose $r \ge 3$ is odd and $T$ is an interval exchange
transformation that satisfies the Keane condition and reverses the
order of the $r$ partition intervals. Then, for every non-constant
continuous $f$ and every $\omega$, we have
$\sigma_\mathrm{ac}(H_\omega) = \emptyset$.
\end{theorem}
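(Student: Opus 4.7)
The plan is a proof by contradiction via Kotani's theorem: assuming $\sigma_{\mathrm{ac}}(H_\omega)\ne\emptyset$, I would use the discontinuities of $T$ to produce two elements of $\supp(\tilde\mu)$, where $\tilde\mu=\Phi_*\mu$ is the push-forward of Lebesgue measure under $\Phi(\omega)=(f(T^n\omega))_{n\in\Z}$, that agree on $\Z_+$ but differ on $\Z_-$, contradicting Kotani's deterministic theorem. Since $T$ is minimal by Keane and $f$ is continuous, standard spectral-continuity results let us reduce the pointwise-in-$\omega$ claim to showing $\Sigma_{\mathrm{ac}}=\emptyset$.

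Suppose $\Sigma_{\mathrm{ac}}\ne\emptyset$. Then $\{E:L(E)=0\}$ has positive Lebesgue measure, and Kotani's theorem gives that for $\tilde\mu$-a.e.\ $V\in\R^\Z$ the right half $V|_{\Z_+}$ determines the left half $V|_{\Z_-}$ via a fixed measurable map $F$. To contradict this, fix a discontinuity $\gamma$ of $T^{-1}$ together with the two distinct one-sided limits $\eta^+,\eta^-\in[0,1)$ of $T^{-1}$ at $\gamma$; choose $n_0\ge 1$ and set $\omega^\ast:=T^{n_0}\gamma$. Then form the boundary potentials $V^+:=\lim_{\omega\downarrow\omega^\ast}V_\omega$ and $V^-:=\lim_{\omega\uparrow\omega^\ast}V_\omega$, both lying in $\overline{\Phi(\Omega)}=\supp(\tilde\mu)$ by continuity of $f$.

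A generic choice of $(\gamma,n_0)$, available thanks to the Keane disjointness of discontinuity orbits, ensures $T^n$ is continuous at $\omega^\ast$ for every $n\ne -n_0$, so that $V^+$ and $V^-$ agree on $[-n_0+1,\infty)\supseteq\Z_+$ while $V^\pm(-n_0-k)=f(T^{-k}\eta^\pm)$ for all $k\ge 0$. If some $(\gamma,n_0,k)$ realizes $f(T^{-k}\eta^+)\ne f(T^{-k}\eta^-)$, then $V^+\ne V^-$ with $V^+|_{\Z_+}=V^-|_{\Z_+}$, violating the map $F$ and giving the contradiction. The subtlety that $V^\pm$ are only boundary values, not $\tilde\mu$-typical points, is handled by a Lusin/approximation argument along typical sequences $\omega_n\to\omega^\ast$ from left and right, so that the deterministic relation $F$ (valid on a full-measure set and extendable by continuity on a large compact) survives passage to the limit.

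The main obstacle is ruling out the bad scenario in which some non-constant continuous $f$ satisfies $f(T^{-k}\eta_j^+)=f(T^{-k}\eta_j^-)$ for \emph{every} discontinuity $\gamma_j$ of $T^{-1}$ and \emph{every} $k\ge 0$. This is precisely where the oddness of $r\ge 3$ and the reversal permutation are used. Reversal IETs enjoy the time-reversal symmetry $RTR=T^{-1}$ with $R(x)=1-x$ on $[0,1)$, which lets one describe the pairs $\{\eta_j^\pm\}$ combinatorially in terms of the original discontinuities $\{\beta_i\}$; minimality then propagates the identifications $f(x)=f(y)$ along dense backward orbits of these pairs. For odd $r\ge 3$ the combinatorics of this propagation is expected to reach a dense subset of $[0,1)$ and, by continuity of $f$, force $f$ to be constant, contradicting the hypothesis. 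Making this propagation step rigorous and pinpointing how the odd parity of $r$ rules out the rotational degeneracy present when $r=2$ is the crux of the argument.
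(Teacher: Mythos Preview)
Your overall strategy---Kotani's determinism plus a pair of limit potentials agreeing on a half-line but differing somewhere---is exactly the mechanism the paper uses (Lemma~\ref{l.mainlemma} and Theorem~\ref{t.generalthm}). But your proof has a genuine gap, and you identify it yourself: the ``propagation step'' that rules out the bad scenario in which $f$ matches across every discontinuity pair at every time is not carried out. Saying the identifications are ``expected to reach a dense subset'' and that making this rigorous ``is the crux of the argument'' is not a proof; it is precisely the content of the theorem that remains.

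The paper fills this gap with a clean combinatorial device you are missing. To each irreducible permutation it associates a finite directed graph $G$ on the vertices $\{0,1,\omega_1,\ldots,\omega_{r-1}\}$, with an edge $v_1\to v_2$ whenever $T_-v_1=T_+v_2$, plus two ``special'' edges through $0$ and $1$. The key observation (Theorem~\ref{t.specialthm}) is: if $f\circ T^n$ is continuous for every $n\ge 1$, then walking around a cycle in $G$ that contains \emph{exactly one} special edge forces $f(T^n\omega_j)=f(T^{n+1}\omega_j)$ for all $n\ge 1$ and some discontinuity $\omega_j$; density of $\{T^n\omega_j:n\ge1\}$ (Keane) then makes $f$ constant. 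For the order-reversing permutation with $r=2k+1$ odd, the graph $G$ splits into two cycles, one through the even-indexed $\omega_{2i}$ and $0$, the other through the odd-indexed $\omega_{2i-1}$ and $1$, and each contains exactly one special edge---so the hypothesis is met. This graph argument \emph{is} your propagation step made precise, and the parity of $r$ enters exactly here: for the rotation-class permutations the special edges sit in the same cycle and the argument collapses.

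Two smaller points. First, the paper works with discontinuities of $T^N$ for $N\ge 1$ (left half determines right), not of $T^{-1}$; your reversal to $T^{-1}$ is permissible but adds friction and your use of the symmetry $RTR=T^{-1}$ is never actually deployed. Second, you do not need a Lusin argument: the paper shows (Lemma~\ref{l.suppincl}) that \emph{every} $V_\omega$ lies in $\supp\tilde\mu$, and Kotani's extension map is \emph{continuous} on $\supp\tilde\mu$ (Theorem~\ref{t.kotani1}), so the limit potentials $V^\pm$ are legitimate inputs to the determinism statement directly.
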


\noindent\textit{Remarks.} (i) Reversal of order means that if we
enumerate the partition intervals from left to right by
$1,\ldots,r$, then $i < j$ implies that the image of the $i$-th
interval under $T$ lies to the right of the image of the $j$-th
interval. Note that our assumptions on $T$ leave a lot of freedom
for the choice of interval lengths. Indeed, the Keane condition
holds for Lebesgue almost all choices of interval lengths.
\\[1mm]
(ii) We would like to emphasize here that \emph{all} non-constant
continuous sampling functions are covered by this result. Of
course, if $f$ is constant, the potentials $V_\omega$ are constant
and the spectrum is purely absolutely continuous, so the result is
best possible as far as generality in $f$ is concerned.
\\[1mm]
(iii) Even more to the point, to the best of our knowledge,
Theorem~\ref{t.sample} is the first result of this kind, that is,
one that identifies an invertible transformation $T$ for which the
absolutely continuous spectrum is empty for all non-constant
continuous sampling functions.\footnote{There are non-invertible
examples, such as the doubling map, defining operators on
$\ell^2(\Z_+)$. In these cases, the absence of absolutely
continuous spectrum follows quickly from non-invertibility; see
\cite{dk2}.} While one should expect that it should be easier to
find such a transformation among those that are strongly mixing,
no such example is known. Our examples are all not strongly mixing
(this holds for interval exchange transformations in general)
\cite{nomixing}.\footnote{After a preliminary version of this
paper was posted, Svetlana Jitomirskaya explained to us how to
prove the absence of absolutely continuous spectrum for the
two-sided full shift and every non-constant continuous sampling
function. Her proof is also based on Kotani theory.}
\\[1mm]
(iv) In particular, this shows that interval exchange
transformations behave differently from rotations in that
regularity (or any other property) of $f$ and sufficiently small
coupling do not yield the existence of some absolutely continuous
spectrum.

The organization of the paper is as follows. We collect some
general results about the absolutely continuous spectrum of
ergodic Schr\"odinger operators in Section~\ref{s.gen}. In
Section~\ref{s.iet}, we recall the formal definition of an
interval exchange transformation and the relevance of the Keane
condition for minimality of such dynamical systems. The main
result established in Section~\ref{s.glob} is that for minimal
interval exchange transformations and continuous sampling
functions, the spectrum and the absolutely continuous spectrum of
$H_\omega$ are globally independent of $\omega$.
Section~\ref{s.ac} is the heart of the paper. Here we establish
several sufficient conditions for the absence of absolutely
continuous spectrum. Theorems~\ref{t.sample2} and \ref{t.sample}
will be particular consequences of these results. Additionally, we
establish a few results in topological dynamics. In
Section~\ref{s.pp} we use Gordon's lemma to prove almost sure
absence of eigenvalues for (what we call) Liouville interval
exchange transformations. This is an extension of a result of
Avron and Simon.

\section{Absolutely Continuous Spectrum: Kotani, Last-Simon, Remling}\label{s.gen}

In this section we discuss several important results concerning
the absolutely continuous spectrum. The big three are Kotani
theory \cite{k1,k2,k3,k4,k5}, the semicontinuity result of Last
and Simon \cite{ls}, and Remling's earthquake\footnote{For lack of
a better term, we quote Barry Simon here (OPSFA9, Luminy, France,
July 2007).} \cite{r}.

Kotani theory and Remling's work provide ways to use the presence of
absolutely continuous spectrum to ``predict the future.'' Kotani
describes this phenomenon by saying that \emph{all potentials
leading to absolutely continuous spectrum are deterministic}, while
Remling uses absolutely continuous spectrum to establish an
\emph{oracle theorem}. Last and Simon discuss the effect of taking
limits of translates of a potentials on the absolutely continuous
spectrum. Their result may also be derived within the context
Remling is working in; see \cite{r} for the derivation.

Let us return to our general setting where $(\Omega,\mu,T)$
ergodic is given and, for a bounded measurable function $f :
\Omega \to \R$, the potentials $V_\omega$ and operators $H_\omega$
are defined by \eqref{f.pot} and \eqref{f.oper}, respectively. We
first recall a central result from Kotani theory. It is convenient
to pass to the topological setting described briefly in the
introduction. Choose a compact interval $I$ that contains the
range of $f$ and set $\tilde \Omega = I^\Z$, equipped with the
product topology. The shift transformation $\tilde T : \tilde
\Omega \to \tilde \Omega$ is given by $[\tilde T \tilde \omega](n)
= \tilde \omega(n+1)$. It is clearly a homeomorphism. Consider the
measurable map $\mathcal{K} : \Omega \to \tilde \Omega$, $\omega
\mapsto V_\omega$. The push-forward of $\mu$ by the map
$\mathcal{K}$ will be denoted by $\tilde \mu$. Finally, the
function $\tilde f : \tilde \Omega \to \R$ is given by $\tilde
f(\tilde \omega) = \tilde \omega (0)$.

We obtain a system $(\tilde \Omega , \tilde \mu , \tilde T ,
\tilde f)$ that generates Schr\"odinger operators $\{\tilde
H_{\tilde \omega}\}_{\tilde \omega \in \tilde \Omega}$ with
potentials $\tilde V_{\tilde \omega}(n) = \tilde f(\tilde T^n
\tilde \omega)$ in such a way that the associated sets $\tilde
\Sigma$, $\tilde \Sigma_\mathrm{pp}$, $\tilde \Sigma_\mathrm{sc}$,
$\tilde \Sigma_\mathrm{ac}$ coincide with the original sets
$\Sigma$, $\Sigma_\mathrm{pp}$, $\Sigma_\mathrm{sc}$,
$\Sigma_\mathrm{ac}$, but they come from a homeomorphism $\tilde
T$ and a continuous function $\tilde f$.

The following theorem shows that the presence of absolutely
continuous spectrum implies (continuous) determinism. For proofs,
see \cite{k5} and \cite{d}, but its statement and history can be
traced back to the earlier Kotani papers \cite{k1,k2,k3,k4}.

\begin{theorem}[Kotani's Continuous Extension Theorem]\label{t.kotani1}
Suppose $\Sigma_\mathrm{ac} \not= \emptyset$. Then, the
restriction of any $\tilde \omega \in \supp \tilde \mu$ to either
$\Z_+$ or $\Z_-$ determines
$\tilde \omega$ uniquely among elements of $\supp \tilde \mu$ and
the extension map
$
\mathcal{E} : \supp \tilde \mu|_{\Z_-} \to \supp \tilde \mu ,
\quad \tilde \omega|_{\Z_-} \to \tilde \omega
$
is continuous.
\end{theorem}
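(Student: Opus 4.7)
The plan is to combine Kotani's reflectionless-potential theorem with a soft compactness argument. Let $L(E)$ denote the Lyapunov exponent of the cocycle $(\tilde T, A_E)$ on $(\tilde\Omega,\tilde\mu)$ and set $Z = \{E \in \R : L(E) = 0\}$. By the Ishii--Pastur--Kotani theorem, $Z$ is an essential support of $\Sigma_{\mathrm{ac}}$, so the hypothesis $\Sigma_{\mathrm{ac}} \neq \emptyset$ forces $|Z| > 0$.

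The main analytic input is the reflectionless statement of Kotani: for every $\tilde\omega \in \supp\tilde\mu$, the half-line Weyl--Titchmarsh $m$-functions $m_\pm(\cdot,\tilde\omega)$ of $\tilde H_{\tilde\omega}$ satisfy $m_+(E + i0,\tilde\omega) = -\overline{m_-(E + i0,\tilde\omega)}$ for Lebesgue-a.e.\ $E \in Z$. Kotani originally proves this for $\tilde\mu$-a.e.\ $\tilde\omega$; one promotes it to every element of $\supp\tilde\mu$ by observing that reflectionlessness on $Z$ is $\tilde T$-invariant and closed under pointwise convergence of potentials, while $\tilde\mu$-typical orbits are dense in $\supp\tilde\mu$ by ergodicity.

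Granted this, I would deduce injectivity of the restriction map $R:\tilde\omega \mapsto \tilde\omega|_{\Z_-}$ on $\supp\tilde\mu$ as follows. If $R(\tilde\omega_1) = R(\tilde\omega_2)$, then $m_-(\cdot,\tilde\omega_1) \equiv m_-(\cdot,\tilde\omega_2)$ as Herglotz functions on the upper half-plane, because $m_-$ depends only on the restriction to $\Z_-$. The reflectionless identity then forces the boundary values of $m_+(\cdot,\tilde\omega_1)$ and $m_+(\cdot,\tilde\omega_2)$ to coincide a.e.\ on $Z$. A standard Schwarz-reflection argument --- extending the full-line Green's function $G(z,\tilde\omega_j)$ meromorphically across $Z$ using the fact that its boundary values are purely imaginary there, and matching the two extensions on the lower half-plane --- identifies $m_+(\cdot,\tilde\omega_1) \equiv m_+(\cdot,\tilde\omega_2)$ throughout the upper half-plane. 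Since the Weyl $m$-function determines the potential on its half-line (via, e.g., the asymptotic continued-fraction expansion at $\infty$), we conclude $\tilde\omega_1 = \tilde\omega_2$.

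Continuity of $\mathcal{E}$ is then automatic. The set $\supp\tilde\mu$ is closed in the compact metrizable product space $I^\Z$, hence compact. The restriction map $R : \supp\tilde\mu \to \supp\tilde\mu|_{\Z_-}$ is continuous in the product topology (each coordinate projection is continuous) and, by the previous paragraph, injective. A continuous bijection from a compact space onto a Hausdorff space is a homeomorphism, so $\mathcal{E} = R^{-1}$ is continuous. The main obstacle I anticipate is the analytic step: once the boundary values of $m_+$ agree on $Z$, rigorously identifying the two Herglotz functions requires the Schwarz-reflection/meromorphic-continuation machinery for the Green's function, and the preliminary promotion of Kotani's a.e.\ conclusion to every $\tilde\omega \in \supp\tilde\mu$ is also a point that must be handled with care.
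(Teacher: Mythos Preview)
The paper does not actually prove Theorem~\ref{t.kotani1}; it is quoted from the literature with the sentence ``For proofs, see \cite{k5} and \cite{d}, but its statement and history can be traced back to the earlier Kotani papers \cite{k1,k2,k3,k4}.'' So there is no in-paper argument to compare against.

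That said, your outline is precisely the standard route taken in those references: positivity of $|Z|$ via Ishii--Pastur--Kotani, the reflectionless identity $m_+ = -\overline{m_-}$ a.e.\ on $Z$ for all $\tilde\omega\in\supp\tilde\mu$, injectivity of the restriction via uniqueness of Herglotz functions from boundary data on a positive-measure set, and finally the compactness/Hausdorff argument for continuity of the inverse. Two small comments. First, the step ``boundary values of $m_+^{(1)}$ and $m_+^{(2)}$ agree a.e.\ on $Z$ $\Rightarrow$ $m_+^{(1)}\equiv m_+^{(2)}$'' does not need the full Schwarz-reflection machinery you describe; it is enough to note that a nontrivial Herglotz function has $\log|m(E+i0)|\in L^1_{\mathrm{loc}}$, so two Herglotz functions cannot share boundary values on a set of positive Lebesgue measure without coinciding (apply this to a suitable M\"obius image of $m_+^{(1)}-m_+^{(2)}$, or work with the Cayley transforms in $H^\infty$ of the disc). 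Second, the promotion of reflectionlessness from $\tilde\mu$-a.e.\ $\tilde\omega$ to every $\tilde\omega\in\supp\tilde\mu$ is exactly the point where Kotani in \cite{k5} (and Remling in \cite{r}) invests real work; your one-line justification via ``closed under pointwise convergence'' is morally right but hides the fact that one must control limits of $m$-functions, not just potentials. These are the two places where a referee would ask you to fill in details, and you have correctly flagged both.
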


Another useful result, due to Last and Simon \cite{ls}, is that the
absolutely continuous spectrum cannot shrink under pointwise
approximation by translates:

\begin{theorem}[Last-Simon Semicontinuity]\label{t.lastsimon}
Suppose that $\omega_1,\omega_2 \in \Omega$ and $n_k \to \infty$
are such that the potentials $V_{T^{n_k} \omega_1}$ converge
pointwise to $V_{\omega_2}$ as $k \to \infty$. Then,
$\sigma_\mathrm{ac}(H_{\omega_1}) \subseteq
\sigma_\mathrm{ac}(H_{\omega_2})$.
\end{theorem}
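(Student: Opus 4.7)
The plan is to invoke the transfer-matrix characterization of absolutely continuous spectrum established by Last and Simon. For a potential $W : \Z \to \R$ and energy $E \in \R$, let $A_n(E,W)$ denote the $n$-step transfer matrix of the formal eigenvalue equation for $H_W$, noting that $A_n(E,W)$ depends polynomially on the finitely many values $W(0),\ldots,W(n-1)$ and on $E$. Last and Simon show that, modulo Lebesgue null sets,
\[
\sigma_{\mathrm{ac}}(H_W) \;=\; \overline{\bigl\{ E \in \R : \liminf_{|n|\to\infty} \|A_n(E,W)\| < \infty \bigr\}}^{\,\mathrm{ess}},
\]
where the essential closure is taken with respect to Lebesgue measure.

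The first step would be to reduce to a translate-free statement. The shift on $\ell^2(\Z)$ conjugates $H_{\omega_1}$ to $H_{T^{n_k}\omega_1}$, so $\sigma_{\mathrm{ac}}(H_{T^{n_k}\omega_1}) = \sigma_{\mathrm{ac}}(H_{\omega_1})$ for every $k$. Writing $W_k := V_{T^{n_k}\omega_1}$ and $W_\infty := V_{\omega_2}$, it then suffices to prove $\sigma_{\mathrm{ac}}(H_{W_k}) \subseteq \sigma_{\mathrm{ac}}(H_{W_\infty})$ under pointwise convergence $W_k \to W_\infty$. The cocycle identity
\[
A_n(E, W_k) \;=\; A_{n + n_k}(E, V_{\omega_1}) \, A_{n_k}(E, V_{\omega_1})^{-1}
\]
ties the transfer-matrix behavior of each $W_k$ directly to that of the fixed potential $V_{\omega_1}$.

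The second step would be to transport these bounds to the limit. For any fixed $E$ and $n$, the entries of $A_n(E,W)$ are polynomials in the $W(j)$, so pointwise convergence $W_k \to W_\infty$ forces $A_n(E,W_k) \to A_n(E,W_\infty)$ as $k \to \infty$, for every fixed $E$ and $n$. Combined with the Last--Simon characterization applied to the $W_k$, a diagonal extraction would produce an unbounded sequence of indices along which $\|A_n(E,W_\infty)\|$ remains bounded, for almost every $E$ in an essential support of $\sigma_{\mathrm{ac}}(H_{\omega_1})$. Reapplying the characterization to $W_\infty$ and taking essential closures then yields the desired inclusion $\sigma_{\mathrm{ac}}(H_{\omega_1}) \subseteq \sigma_{\mathrm{ac}}(H_{\omega_2})$.

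The main obstacle is the diagonal extraction itself: for each fixed $k$ the characterization supplies an unbounded sequence of indices on which $\|A_n(E,W_k)\|$ is bounded, but a priori this sequence may depend strongly on $k$, and one needs a single sequence that survives the passage $k \to \infty$. The cocycle identity above is what provides the uniform-in-$k$ synchronization needed to carry this out; making this synchronization rigorous, rather than the algebraic manipulation of transfer matrices, is where the real content of the Last--Simon semicontinuity argument lies.
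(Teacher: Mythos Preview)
The paper does not supply a proof of this theorem: it is quoted as a known result of Last and Simon \cite{ls} (and, as the paper remarks, it can alternatively be derived from Remling's work \cite{r}). So there is no in-paper argument to compare against; your sketch is an attempt to reconstruct the original Last--Simon reasoning.

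Your outline is the right one and tracks the structure of the Last--Simon argument: reduce via the unitary shift to $W_k \to W_\infty$ pointwise, use the transfer-matrix characterization of the essential support of $\sigma_{\mathrm{ac}}$, exploit that $A_n(E,\cdot)$ depends only on finitely many values of the potential, and pass to the limit. You are also honest that the argument is not complete: the final paragraph correctly isolates the genuine content of the proof, namely how to manufacture a single unbounded sequence of indices along which $\|A_n(E,W_\infty)\|$ stays bounded, when a priori the good sequences for each $W_k$ may be unrelated. The cocycle identity you write down is correct and is indeed the algebraic ingredient, but by itself it does not yield the synchronization you claim; one needs in addition the $\mathrm{SL}(2,\R)$ fact $\|A^{-1}\|=\|A\|$, so that if $\|A_{m_i}(E,V_{\omega_1})\|\le C$ along a sequence $m_i\to\infty$ then $\|A_{m_j}A_{m_i}^{-1}\|\le C^2$ for all $i,j$, and then a diagonal choice interleaving the $m_i$ with the $n_k$ produces the desired bounded sequence for $W_\infty$. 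As written, your sketch stops just short of this, so it is a plausible outline rather than a proof.
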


We will furthermore need the following proposition,
which follows from strong convergence.

\begin{prop} \label{sigess} Suppose that $\omega_1,\omega_2 \in \Omega$ and $n_k \to \infty$
are such that the potentials $V_{T^{n_k} \omega_1}$ converge
pointwise to $V_{\omega_2}$ as $k \to \infty$. Then,
$\sigma(H_{\omega_1}) \supseteq
\sigma(H_{\omega_2})$.
\end{prop}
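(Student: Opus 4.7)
The plan is to combine the shift-invariance of $\sigma(H_\omega)$ with the upper semicontinuity of the spectrum under strong convergence of uniformly bounded self-adjoint operators.

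First I would observe the shift-equivariance. Let $U_m : \ell^2(\Z) \to \ell^2(\Z)$ be the unitary defined by $(U_m \psi)(n) = \psi(n+m)$. A direct computation from \eqref{f.oper} gives $U_m H_\omega U_m^* = H_{T^m \omega}$. Hence, for every $k$, the operator $H_{T^{n_k}\omega_1}$ is unitarily equivalent to $H_{\omega_1}$, so
$$
\sigma(H_{T^{n_k}\omega_1}) = \sigma(H_{\omega_1}).
$$

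Next I would verify that $H_{T^{n_k}\omega_1} \to H_{\omega_2}$ in the strong operator topology. Since $f$ is bounded, all potentials are bounded uniformly in $k$ by $\|f\|_\infty$, and the operators are uniformly bounded in norm by $2 + \|f\|_\infty$. It therefore suffices to check strong convergence on the dense set of finitely supported vectors. For such a vector $\psi$, the off-diagonal (Laplacian) part of $H_{T^{n_k}\omega_1}\psi - H_{\omega_2}\psi$ vanishes identically, while the diagonal part equals $(V_{T^{n_k}\omega_1}(n) - V_{\omega_2}(n))\psi(n)$, which tends to $0$ pointwise by hypothesis and has support contained in the fixed finite set $\supp \psi$.

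Finally I would apply the standard Weyl-sequence argument establishing upper semicontinuity of the spectrum under strong convergence of uniformly bounded self-adjoint operators: given $\lambda \in \sigma(H_{\omega_2})$ and $\eps > 0$, choose a unit vector $\psi$ with $\|(H_{\omega_2}-\lambda)\psi\| < \eps$; strong convergence on $\psi$ then yields $\|(H_{T^{n_k}\omega_1}-\lambda)\psi\| < 2\eps$ for all sufficiently large $k$, forcing $\mathrm{dist}(\lambda, \sigma(H_{T^{n_k}\omega_1})) < 2\eps$. Combined with the first step, this produces $\lambda_k \in \sigma(H_{\omega_1})$ with $\lambda_k \to \lambda$, and closedness of $\sigma(H_{\omega_1})$ gives $\lambda \in \sigma(H_{\omega_1})$.

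There is no real obstacle here; the only point that requires attention is the justification of strong convergence, which is immediate once one notes the uniform norm bound coming from boundedness of $f$. In spirit this proposition is a packaging of Weyl's criterion together with the trivial shift-equivariance of the construction $\omega \mapsto H_\omega$, and it should be regarded as the spectrum-level companion to the Last--Simon semicontinuity statement (Theorem~\ref{t.lastsimon}), with the crucial difference that the inclusion goes in the opposite direction and holds without any absolute-continuity hypothesis.
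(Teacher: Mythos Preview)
Your argument is correct and is exactly what the paper has in mind: the paper does not give a detailed proof but simply notes that the proposition ``follows from strong convergence,'' and you have supplied precisely that standard strong-convergence/Weyl-sequence argument together with the unitary shift-equivariance $U_m H_\omega U_m^* = H_{T^m\omega}$.
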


Theorems~\ref{t.kotani1}, \ref{t.lastsimon} and
Proposition~\ref{sigess} will be sufficient for our purpose. We do
want to point out, however, that all the results that concern the
absolutely continuous spectrum, and much more, are proved by
completely different methods in Remling's paper \cite{r} --- hence
the term \emph{earthquake}.

\section{Interval Exchange Transformations}\label{s.iet}

In this section we give a precise definition of an interval
exchange transformation and gather a few known facts about these
maps. As a general reference, we recommend Viana's survey
\cite{via}.

\begin{defi}
Suppose we are given an integer $r \ge 2$, a permutation $\pi:
\{1, \dots, r\} \to \{1, \dots r\}$, and an element
$\Lambda \in \Delta^r = \{(\lambda_1, \dots, \lambda_r), \lambda_i
> 0, \sum_{j=1}^r \lambda_j = 1\}$.
Let $\Omega = [0,1)$ be the half-open unit interval with the
topology inherited from $\R$. Then the interval exchange
transformation $T : \Omega \to \Omega$ associated with $(\pi,
\Lambda)$ is given by $T: I_j \ni \omega \mapsto \omega -
\sum_{i=1}^{j-1} \lambda_i + \sum_{i,\,\pi(i) < \pi(j)}
\lambda_i$, where $I_j = \sum_{i=1}^{j-1} \lambda_i + [0,
\lambda_j)$.
\end{defi}

\noindent\textit{Remarks.}
(i) If $r = 2$ and $\pi(1) = 2$, $\pi(2) = 1$, then $T$ is
conjugate to $\tilde T : \R / \Z \to \R / \Z$, $\tilde \omega
\mapsto \tilde \omega + \lambda_2$, that is, a rotation of the
circle. More generally, the same is true whenever $\pi(j) - 1 = j +
k \mod r$ for some $k$. In this sense, interval exchange
transformations are natural generalizations of rotations of the
circle. We call $\pi$ with this property of rotation class.
\\[1mm]
(ii) The permutation $\pi$ is called reducible if there is $k <
r$ such that $\pi(\{1,\ldots,k\}) = \{1,\ldots,k\}$ and irreducible
otherwise. If $\pi$ is reducible, $T$ splits into two interval
exchange transformations. For this reason, it is natural to only
consider irreducible permutations.

\begin{defi}
The interval exchange transformation associated with
$(\pi,\Lambda)$ satisfies the Keane condition if the orbits of the
left endpoints of the intervals $I_1, \ldots,I_r$ are infinite and
mutually disjoint.
\end{defi}

The following result is \cite[Proposition~3.2,4.1]{via}):

\begin{prop}[Keane]\label{p.keane1}
If $\pi$ is irreducible and $\Lambda$ is rationally independent,
then the interval exchange transformation associated with
$(\pi,\Lambda)$ satisfies the Keane condition.

If an interval exchange transformation satisfies the Keane
condition, then it is minimal. More precisely, for every $\omega
\in \Omega$, the set $\{ T^n \omega : n \ge 1 \}$ is dense in
$\Omega$.
\end{prop}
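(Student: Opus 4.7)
The plan addresses the two assertions in turn.

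For the first, the action of $T$ on each $I_j$ is the translation by
$\delta_j = \sum_{\pi(i) < \pi(j)} \lambda_i - \sum_{i<j} \lambda_i$,
an integer linear combination of $\lambda_1,\ldots,\lambda_r$. Iterating, every $T^n\beta_j$ has the form $\beta_j + \sum_{i=1}^r m_i(n,j)\lambda_i \pmod 1$ with integer coefficients determined by the itinerary of the orbit through $I_1,\ldots,I_r$. A violation of Keane---either $T^n\beta_j = \beta_j$ for some $n\ge 1$, or $T^n\beta_j = T^m\beta_k$ with $(n,j) \neq (m,k)$---becomes, upon substituting $\beta_j = \sum_{i<j}\lambda_i$, an identity of the form $\sum_{i=1}^r n_i\lambda_i = N$ with $n_i, N \in \Z$. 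Combining with $\sum_{i=1}^r\lambda_i = 1$ and the rational independence of $\Lambda$ forces $n_1 = \cdots = n_r = N$. What remains is a purely combinatorial identity on the permutation data, and one checks that it is equivalent to the existence of a nonempty proper subset of $\{1,\ldots,r\}$ that is $\pi$-invariant; irreducibility rules this out.

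For the second, I would argue by contradiction. Suppose $T$ satisfies Keane but is not minimal; then there is a nonempty proper closed forward-invariant set $K\subsetneq\Omega$, and the open complement $U = \Omega\setminus K$ is a countable disjoint union of maximal open intervals $\{J_\alpha\}$. Since $T$ is a bijection away from a finite set, forward-invariance of $K$ is equivalent to backward-invariance of $U$, so $T^{-1}(U) \subset U$. The core structural claim is that every endpoint of every $J_\alpha$ lies in $\bigcup_{j=1}^r\operatorname{Orb}_T(\beta_j)$: if an endpoint $x$ of some $J_\alpha$ avoided all these (countable) orbits, then $T^{-k}$ would be continuous at $x$ for every $k\ge 0$ and could be used to propagate $J_\alpha$ backward past $x$ into a strictly larger subinterval of $U$, contradicting maximality. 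Granting this, one tracks iterates of a short $J_\alpha$: by Keane the orbits of the discontinuities are pairwise disjoint and infinite, so the interior of any iterate of $J_\alpha$ is cut by a discontinuity only finitely often, producing infinitely many pairwise disjoint translates of $J_\alpha$ inside $U$ of total length exceeding $|\Omega|$---a contradiction.

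The main obstacle is precisely the counting and geometric tracking in part two: rigorously showing that endpoints of gaps of $K$ lie in discontinuity orbits, and then controlling how the iterates of a chosen gap are broken by discontinuities. A cleaner alternative is Rauzy--Veech induction: the first-return map of $T$ to a suitably chosen subinterval on the right is again an IET on the same or fewer intervals and inherits the Keane condition from $T$. An induction on $r$ then reduces minimality to the base case $r=2$, which is an irrational rotation and hence trivially minimal; the density of the forward orbit $\{T^n\omega:n\ge 1\}$ (rather than the two-sided orbit) follows from the absence of periodic points under Keane.
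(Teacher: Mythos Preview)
The paper does not prove this proposition: it is quoted as \cite[Propositions~3.2 and 4.1]{via} and stated without argument, so there is no ``paper's own proof'' to compare your attempt against.

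Your sketch follows the standard lines found in Keane's original paper and in Viana's survey, but both halves are incomplete as written. In the first part, the reduction to an integer relation $\sum n_i\lambda_i = N$ is correct, and rational independence does force $n_1=\cdots=n_r=N$; however, the assertion that the residual combinatorial identity ``is equivalent to the existence of a nonempty proper $\pi$-invariant subset'' is not something one simply \emph{checks}. The actual argument requires tracking the itinerary coefficients $m_i(n,j)$ carefully (they count visits to each $I_i$), and showing that $m_i(n,j)\equiv N$ for all $i$ forces a reducibility-type obstruction needs real work---this is the heart of Keane's proof and cannot be waved away.

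In the second part, your direct approach has a genuine gap. You claim $T^{-1}(U)\subset U$ from forward-invariance of $K$, which is fine, but the step ``propagate $J_\alpha$ backward past $x$ into a strictly larger subinterval of $U$, contradicting maximality'' does not follow: $T^{-1}(J_\alpha)$ is an interval contained in $U$, but there is no reason it should contain or extend $J_\alpha$ itself---it will generically be some other component $J_\beta$. The endpoint $x$ lies in $K$, and $K$ is only forward-invariant, so you cannot conclude $T^{-k}x\in K$. The correct version of this argument (as in Viana) works instead with the \emph{boundary} of an orbit closure and shows boundary points are preimages of discontinuities, then uses Keane to bound their number. Your Rauzy--Veech alternative is the cleaner route, but note that induction on $r$ alone is not enough: Rauzy induction does not decrease $r$, so one must show instead that the induction can be iterated indefinitely (which is exactly what Keane guarantees) and deduce minimality from that.
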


On the other hand, the Keane condition is not necessary for
minimality; see \cite[Remark~4.5]{via} for an example. For a more
detailed discussion, see \cite[Section~2]{rank2}.

\begin{lemma} \label{lem:helg}
 Assume that $T$ is minimal. Given any $N \geq 1$, $\omega, \omega' \in \Omega$, $\epsilon > 0$,
 there exists $l \in \Z$ such that
 \be\label{eq:hap}
  |T^m(\omega)-T^{m+l}(\omega')|<\epsilon
 \ee
 for any $-N \leq m \leq N$.
\end{lemma}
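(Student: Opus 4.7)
The plan is to exploit the fact that each iterate $T^m$ of an interval exchange transformation is piecewise a translation with only finitely many discontinuities, so on any ``continuity cell'' shared by $T^{-N}, \ldots, T^N$ the distance between two points is preserved by all these iterates.

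First I would assemble the finite partition. Both $T$ and its inverse $T^{-1}$ are piecewise translations with finitely many discontinuities, and compositions of such maps are again piecewise translations; hence for every $-N \le m \le N$, the map $T^m$ has a finite discontinuity set $\mathrm{disc}(T^m) \subset [0,1)$. Let
\[
D = \bigcup_{m=-N}^{N} \mathrm{disc}(T^m),
\]
a finite set. The points of $D$ partition $[0,1)$ into finitely many half-open intervals $J_1,\ldots,J_s$, and by construction every $T^m$ with $-N\le m\le N$, restricted to any single $J_i$, acts as a single translation $x\mapsto x+c_{i,m}$.

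Next I would locate $\omega$ in this partition. Say $\omega \in J_i = [a_i,b_i)$, and consider the set $U = J_i \cap (\omega-\eps,\omega+\eps)$. This set has non-empty interior in $[0,1)$: if $\omega$ lies in the interior of $J_i$ this is obvious, and if $\omega = a_i$ then $U \supset (a_i, \min(b_i,a_i+\eps))$, which is a non-empty open subinterval of $J_i$. Since $T$ is minimal (Proposition~\ref{p.keane1}), the orbit $\{T^l\omega' : l \ge 1\}$ is dense in $[0,1)$, so we may pick $l\in\Z$ with $T^l\omega' \in U \subset J_i$.

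Finally I would conclude by transporting. Because both $\omega$ and $T^l\omega'$ lie in the single continuity cell $J_i$, for each $-N\le m\le N$ the iterate $T^m$ translates both by the same constant $c_{i,m}$, yielding
\[
|T^m(\omega) - T^{m+l}(\omega')| = |(\omega + c_{i,m}) - (T^l\omega' + c_{i,m})| = |\omega - T^l\omega'| < \eps,
\]
which is \eqref{eq:hap}. The only subtle point, and the main obstacle worth flagging, is verifying that the target set $U$ really has non-empty interior when $\omega$ happens to coincide with a discontinuity point of some $T^m$; the half-open convention for the $J_i$'s handles this cleanly, so no further work is needed.
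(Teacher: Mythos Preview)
Your proof is correct and follows essentially the same route as the paper: you locate a half-open interval on which all $T^m$, $-N\le m\le N$, are simultaneously isometries, intersect it with the $\eps$-ball around $\omega$, and then use minimality to land some $T^l\omega'$ inside. The paper does exactly this, only phrasing the first step more tersely (``we can find an interval $[a,b)\ni\omega$ on which the $T^j$ are isometries'') rather than constructing the full partition by the union of discontinuity sets as you do.
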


\begin{proof}
Since the set of discontinuities of $T^j$, $-N \leq j \leq N$ is
discrete in $[0,1)$ and we have chosen $T$ to be right continuous,
we can find an interval $[a,b) \subseteq [0,1)$ containing
$\omega$ such that $T^j$ are isometries on $[a,b)$ for $-N \leq j
\leq N$. Hence, for any $\ti{\omega} \in I := [a,b) \cap (\omega -
\epsilon, \omega + \epsilon)$, \eqref{eq:hap} holds. Since $T$ is
minimal and $I$ has non-empty interior, we can find $l$ such that
$T^l \omega' \in I$, finishing the proof.
\end{proof}

\section{Some Facts About Schr\"odinger Operators Generated by Interval Exchange
Transformations}\label{s.glob}

In this section, we prove some general results for potentials and
operators generated by minimal interval exchange transformations.
First, we show that all potentials belong to the support of the
induced measure and then we use this observation to show that the
absolutely continuous spectrum is globally constant, not merely
almost surely with respect to a given ergodic measure. Note that
there are minimal interval exchange transformations that admit
several ergodic measures.

\begin{lemma}\label{l.suppincl}
Suppose that $T$ is a minimal interval exchange transformation and
$\mu$ is $T$-ergodic. Then, for every continuous sampling function
$f$, we have $\{ V_\omega : \omega \in \Omega \} \subseteq \supp
\tilde \mu$, where $\ti{\mu}$ is as defined in
Section~\ref{s.gen}.
\end{lemma}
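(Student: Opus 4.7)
The goal is to show that $V_\omega \in \supp \tilde\mu$ for every $\omega \in \Omega$. Unpacking the product topology on $\tilde\Omega = I^\Z$, this amounts to showing that for every $N \geq 1$ and every $\epsilon > 0$, the basic cylinder neighborhood
$$B_{N,\epsilon}(V_\omega) := \{\tilde\eta \in \tilde\Omega : |\tilde\eta(n) - V_\omega(n)| < \epsilon,\ |n| \leq N\}$$
has positive $\tilde\mu$-measure. By the definition of the push-forward, this is equivalent to $\mu(U) > 0$ for
$$U := \{\omega' \in \Omega : |f(T^n \omega') - f(T^n \omega)| < \epsilon,\ |n| \leq N\}.$$

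The plan is to exhibit a non-empty open subinterval $I \subseteq \Omega$ contained in $U$, and then to use minimality of $T$ to deduce $\mu(I) > 0$. The construction of $I$ proceeds exactly as in the proof of Lemma~\ref{lem:helg}: by continuity of $f$ at each of the finitely many points $T^n \omega$ with $|n| \leq N$, I choose $\delta > 0$ such that $|x - T^n \omega| < \delta$ forces $|f(x) - f(T^n \omega)| < \epsilon$; then, using that the discontinuity set of $T^j$ for $|j| \leq N$ is discrete in $[0,1)$ together with the right-continuity of the IET, I pick a half-open interval $[a,b) \ni \omega$ on which every such $T^j$ acts as a translation. Setting $I := [a,b) \cap (\omega - \delta, \omega + \delta)$, I have $\omega \in I$ (so $I$ is non-empty and open in $\Omega$), and every $\omega' \in I$ satisfies $|T^n \omega' - T^n \omega| < \delta$ for $|n| \leq N$, hence $\omega' \in U$. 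Thus $I \subseteq U$.

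It remains to show that $\mu(I) > 0$ for this non-empty open $I \subseteq \Omega$. Since $T$ is minimal, for every $\omega'' \in \Omega$ there is some $n \geq 0$ with $T^n \omega'' \in I$, i.e.\ $\omega'' \in T^{-n} I$; hence $\Omega = \bigcup_{n \geq 0} T^{-n} I$. Combining this with the $T$-invariance of $\mu$ (so that $\mu(T^{-n} I) = \mu(I)$) and countable subadditivity gives $1 = \mu(\Omega) \leq \sum_{n \geq 0} \mu(I)$, which forces $\mu(I) > 0$. Therefore $\tilde\mu(B_{N,\epsilon}(V_\omega)) = \mu(U) \geq \mu(I) > 0$, and since $N$ and $\epsilon$ were arbitrary, $V_\omega \in \supp \tilde\mu$.

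No serious obstacle is anticipated: the argument is a direct combination of the local-isometry structure used in Lemma~\ref{lem:helg}, continuity of $f$, and the standard fact that every invariant Borel probability measure on a minimal system has full support. The only mildly delicate point is to handle the half-open topology on $[0,1)$ and the right-continuity of the iterates of $T$, which is what guarantees that an interval of the form $[a,b)$ containing $\omega$ is available even when $\omega$ happens to coincide with a discontinuity of some $T^j$.
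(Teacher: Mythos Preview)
Your argument is correct, with one cosmetic caveat: if $\omega$ happens to be a discontinuity of some $T^j$ with $|j|\le N$, then $a=\omega$ and $I=[\omega,\min(b,\omega+\delta))$ is not open in $\Omega$; but it has non-empty interior, and that is all the minimality step needs (every forward orbit is dense, hence meets $\mathrm{int}(I)$, so $\Omega=\bigcup_{n\ge0}T^{-n}I$ still holds).

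Your route differs from the paper's. You verify $V_\omega\in\supp\tilde\mu$ directly, by showing that every basic cylinder about $V_\omega$ pulls back under $\mathcal K$ to a set $U$ containing a half-open interval of positive $\mu$-measure; this reuses the local-isometry construction of Lemma~\ref{lem:helg} together with the standard fact that an invariant probability on a minimal system charges every non-empty open set. The paper instead argues by approximation and closure: since $\supp\mu=\Omega$ (again by minimality), each $[\omega,\omega+\varepsilon)$ has positive $\mu$-measure and hence contains some $\omega_\varepsilon$ with $V_{\omega_\varepsilon}\in\supp\tilde\mu$; letting $\varepsilon\downarrow0$ gives $\omega_n\downarrow\omega$ with $V_{\omega_n}\in\supp\tilde\mu$, and right-continuity of each $T^n$ plus continuity of $f$ yields $V_{\omega_n}\to V_\omega$ pointwise, so the closed set $\supp\tilde\mu$ contains $V_\omega$. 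The paper's version is shorter and avoids unpacking cylinder sets; yours is more constructive and makes explicit where the piecewise-isometry structure of $T$ is used.
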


\begin{proof}
Note first that since $\supp \mu$ is closed, non-empty, and
$T$-invariant, it must equal all of $\Omega$ because $T$ is
minimal. Now given any $\omega \in \Omega$, we wish to show that
$V_\omega \in \supp \tilde \mu$. Consider the sets $[\omega ,
\omega + \varepsilon)$ for $\varepsilon > 0$ small. Since $\supp
\mu = \Omega$, $\mu ( [\omega,\omega + \varepsilon) )
> 0$ and hence there exists $\omega_\varepsilon \in [\omega,\omega
+ \varepsilon)$ such that $V_{\omega_\varepsilon} \in \supp \tilde
\mu$. It follows that there is a sequence $\omega_n$ which
converges to $\omega$ from the right and for which $V_{\omega_n}
\in \supp \tilde \mu$. Since $T$ is right-continuous and $f$ is
continuous, it follows that $V_{\omega_n} \to V_\omega$ pointwise
as $n \to \infty$. Since $\supp \tilde \mu$ is closed, it follows
that $V_\omega \in \supp \tilde \mu$.
\end{proof}


\begin{theorem}\label{t.constantac}
 Suppose that $T$ is a minimal interval exchange transformation. Then, for
 every continuous sampling function $f$, we have that
 $\sigma(H_\omega)$ and $\sigma_\mathrm{ac}(H_\omega)$ are independent of $\omega$.
\end{theorem}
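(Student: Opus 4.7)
The plan is to establish the pointwise (not just almost sure) constancy of $\sigma(H_\omega)$ and $\sigma_{\mathrm{ac}}(H_\omega)$ by a direct argument: for any pair $\omega_1, \omega_2 \in \Omega$, I will construct a sequence of forward shifts of $\omega_2$ whose induced potentials converge pointwise to $V_{\omega_1}$, then invoke Theorem~\ref{t.lastsimon} and Proposition~\ref{sigess} and reverse the roles of $\omega_1$ and $\omega_2$ to get the reverse inclusions. The only input beyond the cited results is the approximation lemma (Lemma~\ref{lem:helg}) together with the unboundedness of return times that minimality provides.

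Fix $\omega_1, \omega_2 \in \Omega$. For each $k \geq 1$, I apply Lemma~\ref{lem:helg} with $N = k$, $\omega = \omega_1$, $\omega' = \omega_2$, and $\epsilon = 1/k$ to obtain an integer $l_k$ with
\[
|T^m \omega_1 - T^{m+l_k}\omega_2| < 1/k \quad \text{for all } |m| \leq k.
\]
Inspecting the proof of Lemma~\ref{lem:helg}, $l_k$ is any integer for which $T^{l_k}\omega_2$ lies in a prescribed non-empty open interval $I \subseteq [0,1)$. Since $T$ is minimal and invertible, the forward return times $\{l \geq 1 : T^l \omega_2 \in I\}$ form an unbounded set, so I may choose $l_k \to \infty$. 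Because $f$ is continuous on $\Omega$, pointwise convergence of arguments gives pointwise convergence of values: for every $m \in \Z$,
\[
V_{T^{l_k}\omega_2}(m) = f(T^{m+l_k}\omega_2) \longrightarrow f(T^m \omega_1) = V_{\omega_1}(m)
\]
as $k \to \infty$ (once $k \geq |m|$).

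With this convergence in hand, Theorem~\ref{t.lastsimon} (applied with its $\omega_1$ being my $\omega_2$ and its $\omega_2$ being my $\omega_1$) yields $\sigma_{\mathrm{ac}}(H_{\omega_2}) \subseteq \sigma_{\mathrm{ac}}(H_{\omega_1})$, and Proposition~\ref{sigess} yields $\sigma(H_{\omega_2}) \supseteq \sigma(H_{\omega_1})$. Swapping the roles of $\omega_1$ and $\omega_2$ and repeating the construction produces the opposite inclusions, so $\sigma_{\mathrm{ac}}(H_{\omega_1}) = \sigma_{\mathrm{ac}}(H_{\omega_2})$ and $\sigma(H_{\omega_1}) = \sigma(H_{\omega_2})$. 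Since $\omega_1, \omega_2$ were arbitrary, both sets are independent of $\omega$. There is no serious obstacle here; the only mildly delicate point is ensuring $l_k \to \infty$ (required by the hypotheses of Theorem~\ref{t.lastsimon} and Proposition~\ref{sigess}), which follows immediately from minimality of $T$ on the non-compact space $[0,1)$, together with the fact that continuity of $f$ at each point is enough for the pointwise convergence of potentials despite the absence of uniform continuity.
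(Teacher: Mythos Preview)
Your argument is correct and follows essentially the same approach as the paper's proof: both use Lemma~\ref{lem:helg} together with continuity of $f$ to produce shifts of one point whose potentials converge pointwise to the potential of the other, then apply Proposition~\ref{sigess} and Theorem~\ref{t.lastsimon} and symmetrize. You are, if anything, slightly more careful than the paper in explicitly justifying that the shift indices $l_k$ can be taken to tend to $+\infty$ (which is indeed required by the hypotheses of those two results), via the unboundedness of forward return times that minimality provides.
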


\begin{proof}
We first show that for $\omega, \omega' \in \Omega$, we have
$\sigma(H_{\omega}) = \sigma(H_{\omega'})$. By
Lemma~\ref{lem:helg} and $f$ being continuous, we can find a
sequence $n_k$ such that $\sup_{|l| \leq k} |f(T^{n_k + l} \omega)
- f(T^{l} \omega')| \leq \frac{1}{k}$. Hence $V_{T^{n_k} \omega}$
converges pointwise to $V_{\omega'}$. Now Proposition~\ref{sigess}
implies that $\sigma(H_{\omega}) = \sigma(H_{T^{n_k} \omega})
\supseteq \sigma(H_{\omega'})$. Interchanging the roles of
$\omega$ and $\omega'$ now finishes the proof. To prove the claim
about the absolutely continuous spectrum, repeat the above
argument replacing Proposition~\ref{sigess} by
Theorem~\ref{t.lastsimon} to obtain
$\sigma_\mathrm{ac}(H_{\omega}) = \sigma_\mathrm{ac}(H_{T^{n_k}
\omega}) \subseteq \sigma_\mathrm{ac}(H_{\omega'})$. This implies
$\omega$ independence of $\sigma_\mathrm{ac}(H_\omega)$.
\end{proof}

\section{Absence of Absolutely Continuous Spectrum for Schr\"odinger
Operators Generated by Interval Exchange
Transformations}\label{s.ac}

In this section we identify situations in which the absolutely
continuous spectrum can be shown to be empty. Particular
consequences of the results presented below are the sample
theorems stated in the introduction, Theorems~\ref{t.sample2} and
\ref{t.sample}. We will make explicit later in this section how
these theorems follow from the results obtained here.

Let us first establish the tool we will use to exclude absolutely
continuous spectrum.

\begin{lemma}\label{l.mainlemma}
Suppose $T$ is a minimal interval exchange transformation and $f$
is a continuous sampling function. Assume further that there are
points $\omega_k$, $\hat \omega_k \in \Omega$ such that for some
$N \geq 1$, we have
\begin{equation}\label{eq:condkot1}
\limsup_{k\to\infty} |f(T^N \omega_k) - f(T^N \hat \omega_k)| > 0,
\end{equation}
and for every $n \leq 0$, we have
\begin{equation}\label{eq:condkot2}
\lim_{k\to\infty} |f(T^n \omega_k) - f(T^n \hat \omega_k)| = 0.
\end{equation}
Then, $\sigma_\mathrm{ac}(H_\omega) = \emptyset$ for every $\omega
\in \Omega$.
\end{lemma}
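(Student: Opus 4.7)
The plan is to argue by contradiction, using Kotani's Continuous Extension Theorem (Theorem~\ref{t.kotani1}) together with Lemma~\ref{l.suppincl}. Assume $\sigma_\mathrm{ac}(H_\omega)\neq\emptyset$ for some $\omega$; by Theorem~\ref{t.constantac} this is then true for every $\omega$, so the common set $\Sigma_\mathrm{ac}$ is nonempty. Fix any $T$-ergodic Borel measure $\mu$ on $\Omega$ (such a measure exists because $T$ is a continuous-enough-for-Krylov--Bogoliubov map on $[0,1)$; one may for instance take an ergodic component of Lebesgue measure or any invariant measure produced by the Masur--Veech machinery). Pushing forward by the coding map $\mathcal{K}$ produces $\tilde\mu$ on $\tilde\Omega=I^{\mathbb Z}$, and Kotani's theorem gives injectivity of the restriction map $\supp\tilde\mu\to\supp\tilde\mu|_{\mathbb{Z}_-}$.

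Next I would exploit hypothesis \eqref{eq:condkot1} to pick $\delta>0$ and a subsequence (still called $k$) along which $|f(T^N\omega_k)-f(T^N\hat\omega_k)|\geq\delta$. Because $\tilde\Omega=I^{\mathbb Z}$ is compact in the product topology, a diagonal/Tychonoff argument lets me pass to a further subsequence so that both $V_{\omega_k}\to\tilde\omega_\infty$ and $V_{\hat\omega_k}\to\tilde{\hat\omega}_\infty$ pointwise for some $\tilde\omega_\infty,\tilde{\hat\omega}_\infty\in\tilde\Omega$. By Lemma~\ref{l.suppincl}, each $V_{\omega_k}$ and $V_{\hat\omega_k}$ lies in $\supp\tilde\mu$, and since $\supp\tilde\mu$ is closed, the two limits lie in $\supp\tilde\mu$ as well.

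Now I combine the two hypotheses. Condition \eqref{eq:condkot2} gives
\[
\tilde\omega_\infty(n)=\tilde{\hat\omega}_\infty(n) \qquad \text{for every } n\leq 0,
\]
so $\tilde\omega_\infty$ and $\tilde{\hat\omega}_\infty$ have identical restrictions to $\mathbb Z_-$. On the other hand, the chosen subsequence yields $|\tilde\omega_\infty(N)-\tilde{\hat\omega}_\infty(N)|\geq\delta>0$, so $\tilde\omega_\infty\neq\tilde{\hat\omega}_\infty$. This directly contradicts the uniqueness part of Theorem~\ref{t.kotani1} (the continuity of $\mathcal{E}$ is not strictly needed here; injectivity suffices). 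Hence $\Sigma_\mathrm{ac}=\emptyset$, and Theorem~\ref{t.constantac} promotes this to $\sigma_\mathrm{ac}(H_\omega)=\emptyset$ for every $\omega\in\Omega$.

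The main conceptual obstacle is the mismatch between $\Omega$ and $\tilde\Omega$: $T$ is discontinuous and $\omega_k,\hat\omega_k$ need not converge in $\Omega$ to a useful limit, so one cannot work directly with subsequential limits of the base points. The trick is to transfer the problem to $\tilde\Omega$ via the map $\omega\mapsto V_\omega$, where the codomain is the compact product space $I^{\mathbb Z}$; the convergence we actually need is merely pointwise convergence of potentials, which is exactly the topology in which compactness and Lemma~\ref{l.suppincl} deliver a limit inside $\supp\tilde\mu$. After that transfer, Kotani's rigidity finishes the argument in one line.
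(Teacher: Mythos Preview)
Your proof is correct and uses the same ingredients as the paper's: Lemma~\ref{l.suppincl} to place all $V_{\omega_k},V_{\hat\omega_k}$ in $\supp\tilde\mu$, Kotani's Theorem~\ref{t.kotani1} for the contradiction, and Theorem~\ref{t.constantac} to globalize. The only cosmetic difference is how compactness is deployed: the paper invokes \emph{uniform continuity} of the extension map $\mathcal{E}$ on the compact set $\supp\tilde\mu|_{\Z_-}$ to get the contradiction directly from \eqref{eq:condkot1}--\eqref{eq:condkot2}, whereas you pass to subsequential limits in $I^{\Z}$ and use the \emph{injectivity} of $\mathcal{E}$ instead---these are two standard ways to package the same compactness argument.
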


\begin{proof}
By passing to a subsequence of $\omega_k$, we can assume that the
limit in \eqref{eq:condkot1} exists. Fix any $T$-ergodic measure
$\mu$ and assume that the corresponding almost sure absolutely
continuous spectrum, $\Sigma_\mathrm{ac}$, is non-empty. By
Lemma~\ref{l.suppincl}, the potentials corresponding to the
$\omega_k$'s and $\hat \omega_k$'s belong to $\supp \tilde \mu$.
Then, by Theorem~\ref{t.kotani1}, the map from the restriction of
elements of $\supp \tilde \mu$ to their extensions is continuous.
Moreover, since the domain of this map is a compact metric space,
the map is uniformly continuous. Combining this with
\eqref{eq:condkot1} and \eqref{eq:condkot2}, we arrive at a
contradiction. Thus, the $\mu$-almost sure absolutely continuous
spectrum is empty. This implies the assertion because
Theorem~\ref{t.constantac} shows that the absolutely continuous
spectrum is independent of the parameter.
\end{proof}

Our first result on the absence of absolutely continuous spectrum
works for all permutations, but we have to impose a weak
assumption on~$f$.

\begin{theorem}\label{t.generalthm}
Suppose $T$ is an interval exchange transformation that satisfies
the Keane condition. If $f$ is a continuous sampling function such
that there are $N \ge 1$ and $\omega_d \in (0,1)$ with
\begin{equation}\label{f.maincond}
 \lim_{\omega \uparrow \omega_d} f(T^N \omega) \neq \lim_{\omega \downarrow \omega_d} f(T^N
 \omega),
\end{equation}
then $\sigma_\mathrm{ac}(H_\omega) = \emptyset$ for every $\omega
\in \Omega$. In other words, if $f$ is continuous and $f \circ
T^n$ is discontinuous for some $n \ge 1$, then
$\sigma_\mathrm{ac}(H_\omega) = \emptyset$ for every $\omega \in
\Omega$.
\end{theorem}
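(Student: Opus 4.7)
The plan is to apply Lemma~\ref{l.mainlemma} with $\omega_k \uparrow \omega_d$ and $\hat\omega_k \downarrow \omega_d$. The equivalence of the two formulations of the hypothesis is immediate: since $f$ is continuous, $f\circ T^N$ has a discontinuity at $\omega_d$ exactly when the (automatically existing) one-sided limits of the piecewise isometry $T^N$ at $\omega_d$ are mapped by $f$ to distinct values, which is \eqref{f.maincond}. This choice of sequences also immediately produces \eqref{eq:condkot1}: by continuity of $f$, $f(T^N\omega_k) \to \lim_{\omega\uparrow\omega_d} f(T^N\omega)$ and $f(T^N\hat\omega_k) \to \lim_{\omega\downarrow\omega_d} f(T^N\omega)$, and \eqref{f.maincond} guarantees these two limits differ.

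The real work, and the main obstacle, lies in \eqref{eq:condkot2}. I would reduce it to the claim that $T^n$ is continuous at $\omega_d$ for every $n\le 0$: once this holds, $T^n\omega_k$ and $T^n\hat\omega_k$ both converge to $T^n\omega_d$, and continuity of $f$ yields \eqref{eq:condkot2}. The Keane condition is precisely what rules out the possibility that $\omega_d$ is a discontinuity of some $T^{-k}$ with $k\ge 1$. Indeed, since the discontinuities of $T^N$ lie in $\bigcup_{j=0}^{N-1} T^{-j}\{\beta_1,\dots,\beta_r\}$, the hypothesis provides $j^\ast \in \{0,\dots,N-1\}$ and $i^\ast$ with $T^{j^\ast}\omega_d = \beta_{i^\ast}$. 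Since $T^{-1}$ is itself a piecewise translation whose discontinuities lie in $\{T\beta_1,\dots,T\beta_r\}$, the discontinuities of $T^{-k}$ lie in $\bigcup_{l=1}^{k} T^{l}\{\beta_1,\dots,\beta_r\}$. If $\omega_d$ were such a discontinuity, then $\omega_d = T^l\beta_j$ for some $l\ge 1$ and some $j$; combining with $T^{j^\ast}\omega_d = \beta_{i^\ast}$ yields $T^{j^\ast+l}\beta_j = \beta_{i^\ast}$ with $j^\ast+l\ge 1$, in direct contradiction with the Keane condition that the orbits of the left endpoints are infinite and mutually disjoint. The assumption $\omega_d \in (0,1)$ handles the edge case where a putative discontinuity point equals $0$.

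With $T^n$ continuous at $\omega_d$ for every $n\le 0$ established, Lemma~\ref{l.mainlemma} delivers $\sigma_\mathrm{ac}(H_\omega) = \emptyset$ for every $\omega\in\Omega$. The second formulation of the statement then follows from the first via the equivalence noted in the first paragraph.
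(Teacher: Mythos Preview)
Your proof is correct and follows the paper's approach exactly: deduce from the Keane condition that $T^n$ is continuous at $\omega_d$ for every $n\le 0$, take $\omega_k\uparrow\omega_d$ and $\hat\omega_k\downarrow\omega_d$, and invoke Lemma~\ref{l.mainlemma}. The only point you leave implicit that the paper makes explicit is the appeal to Proposition~\ref{p.keane1} (Keane $\Rightarrow$ minimal), which is needed to meet the hypothesis of Lemma~\ref{l.mainlemma}.
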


\begin{proof}
Since $f$ is continuous, it follows from \eqref{f.maincond} that
$\omega_d$ is a discontinuity point of $T^N$. Since $T$ satisfies
the Keane condition, this implies that $T^n$, $n \le 0$, are all
continuous at $\omega_d$. Thus, choosing a sequence $\omega_k
\uparrow \omega_d$ and a sequence $\hat \omega_k \downarrow
\omega_d$, we obtain the conditions \eqref{eq:condkot1} and
\eqref{eq:condkot2}. Thus, the theorem follows from
Proposition~\ref{p.keane1} and Lemma~\ref{l.mainlemma}.
\end{proof}

\noindent\textit{Remarks.} (i) If $\pi$ is irreducible and not of rotation class,
then the interval exchange transformation $T$ has discontinuity points. Fix any such point
$\omega_d$ and consider the continuous functions $f$ that obey
\begin{equation}\label{f.maincond2}
\lim_{\omega \uparrow \omega_d} f(T \omega) \neq \lim_{\omega
\downarrow \omega_d} f(T \omega).
\end{equation}
This is clearly an open and dense set, even in more restrictive
categories such as finitely differentiable or infinitely
differentiable functions. Thus, the condition on $f$ in
Theorem~\ref{t.generalthm} is rather weak.
\\[1mm]
(ii) If the permutation $\pi$ is such that $T$ is topologically conjugate to a
rotation of the circle, then there is only one discontinuity point
and the condition \eqref{f.maincond2} forces $f$ to be
discontinuous if regarded as a function on the circle! Thus, in
this special case, Theorem~\ref{t.generalthm} only recovers a
result of Damanik and Killip from \cite{dk}.

Even though the condition on $f$ in Theorem~\ref{t.generalthm} is
weak, it is of course of interest to identify cases where no
condition (other than non-constancy) has to be imposed at all. The
following definition will prove to be useful in this regard.

\begin{defi}
We define a finite directed graph $G = (V,E)$ associated with an
interval exchange transformation $T$ coming from data
$(\pi,\Lambda)$ with $\pi$ irreducible. Denote by $\omega_i$ the
right endpoint of $I_j$, $1 \le j \le r-1$. Set $V = \{\omega_1 ,
\ldots , \omega_{r-1}\} \cup \{0,1\}$. The edge set $E$ is defined
as follows. Write
$
T_- \hat \omega = \lim_{\omega \uparrow \hat \omega} T \omega
\quad \text{ and } \quad T_+ \hat \omega = \lim_{\omega \downarrow
\hat \omega} T \omega.
$
Then there is an edge from vertex $v_1$ to vertex $v_2$ if $T_-
v_1 = T_+ v_2$. In addition, there are two special edges, $\tilde
e_1$ and $\tilde e_2$. The edge $\tilde e_1$ starts at $0$ and
ends at $\omega_j$ for which $T_+ \omega_j = 0$, and the edge
$\tilde e_2$ ends at $1$ and starts at $\omega_k$ where $T_-
\omega_k = 1$.
\end{defi}

\noindent\textit{Remarks.} (i) Irreducibility of $\pi$ ensures
that the $\omega_j$ and $\omega_k$ that enter the definition of
the special edges actually exist.
\\[1mm]
(ii) Note that the graph $G$ depends only on $\pi$, that is, it is
independent of $\Lambda$. Indeed,
$$
 T_+(\omega_j) = \begin{cases} 0 & \pi(j + 1) = 1 \\ T_-(1) & \pi(j + 1) - 1 = \pi(r) \\  T_-(\omega_k) & \pi(j + 1) -1 = \pi(k) \end{cases}
 \quad \text{ and } \quad
 T_-(\omega_j) = \begin{cases} 1 & \pi(j) = r \\ T_+(0) & \pi(j) + 1 = \pi(1)  \\ T_+(\omega_k) & \pi(j)+1 = \pi(k+1) \end{cases}
$$
for $j = 1, \dots, r-1$.
\\[1mm]
(iii) $T$ is defined to be right continuous, so we could simply
replace $T_+ \hat{\omega}$ by $T\hat{\omega}$. In order to emphasize
that the discontinuity of $T$ is important, we did not do so in
the above definition.

\noindent\textit{Examples.} In these examples, we will denote
permutations $\pi$ as follows:
$$
\pi = \begin{pmatrix} 1 & 2 & 3 & \cdots & r \\ \pi(1) & \pi(2) &
\pi(3) & \cdots & \pi(r) \end{pmatrix}.
$$
(i) Here are the permutations relevant to Theorem~\ref{t.sample}:
$$
\pi = \begin{pmatrix} 1 & 2 & 3 & \cdots & 2k & 2k+1 \\ 2k+1 & 2k
& 2k-1 & \cdots & 2 & 1 \end{pmatrix}.
$$
The corresponding graph

\setlength{\unitlength}{1mm}
\begin{center}
\begin{picture}(100,20)

\put(0,13){\circle{10}\makebox(0,0){$0$}}

\put(5,13){\vector(1,0){10}}

\put(20,13){\circle{10}\makebox(0,0){$\omega_{2k}$}}

\put(25,13){\vector(1,0){10}}

\put(40,13){\circle{10}\makebox(0,0){$\omega_{2k-2}$}}

\put(45,13){\vector(1,0){10}}

\put(65,13){\vector(1,0){10}}

\put(80,13){\circle{10}\makebox(0,0){$\omega_{4}$}}

\put(85,13){\vector(1,0){10}}

\put(100,13){\circle{10}\makebox(0,0){$\omega_{2}$}}

\put(58,12){$\cdots$}

\put(8,16){$\tilde e_1$}

\put(50,8){\oval(100,10)[b]}

\put(0.04,7.17){\vector(-1,4){0.01}}

\end{picture}

\begin{picture}(100,20)

\put(0,13){\circle{10}\makebox(0,0){$\omega_{2k-1}$}}

\put(5,13){\vector(1,0){10}}

\put(20,13){\circle{10}\makebox(0,0){$\omega_{2k-3}$}}

\put(25,13){\vector(1,0){10}}

\put(40,13){\circle{10}\makebox(0,0){$\omega_{2k-5}$}}

\put(45,13){\vector(1,0){10}}

\put(65,13){\vector(1,0){10}}

\put(80,13){\circle{10}\makebox(0,0){$\omega_{1}$}}

\put(85,13){\vector(1,0){10}}

\put(100,13){\circle{10}\makebox(0,0){$1$}}

\put(58,12){$\cdots$}

\put(88,16){$\tilde e_2$}

\put(50,8){\oval(100,10)[b]}

\put(0.04,7.17){\vector(-1,4){0.01}}

\end{picture}
\end{center}
consists of two cycles, each of which contains exactly one special
edge.
\\[1mm]
(ii) For the permutations
$$
\pi = \begin{pmatrix} 1 & 2 & \cdots & k & k+1 & \cdots & r-1 & r \\
r-k+1 & r-k+2 & \cdots & r & 1 & \cdots & r-k-1 & r-k
\end{pmatrix},
$$
which correspond to rotations of the circle, the graph looks as
follows:
\begin{center}
\begin{picture}(105,20)

\put(10,13){\circle{10}\makebox(0,0){$0$}}

\put(15,13){\vector(1,0){10}}

\put(30,13){\circle{10}\makebox(0,0){$\omega_{r-k}$}}

\put(35,13){\vector(1,0){10}}

\put(50,13){\circle{10}\makebox(0,0){$1$}}

\put(18,16){$\tilde e_1$}

\put(38,16){$\tilde e_2$}

\put(30,8){\oval(40,10)[b]}

\put(10.04,7.17){\vector(-1,4){0.01}}

\put(70,13){\circle{10}\makebox(0,0){$\omega_{j}$}}

\put(80,13){$j \not= r-k$}

\put(70,8.7){\oval(5,10)[b]}

\put(67.506,7.82){\vector(0,1){1}}

\end{picture}
\end{center}
It consists of several cycles, each of which contains either no
special edge or two special edges.

\begin{theorem}\label{t.specialthm}
Suppose the permutation $\pi$ is irreducible and the associated
graph $G$ has a cycle that contains exactly one special edge.
Assume furthermore that $\Lambda$ is such that the interval
exchange transformation $T$ associated with $(\pi,\Lambda)$
satisfies the Keane condition. Then, for every continuous
non-constant sampling function $f$, we have
$\sigma_\mathrm{ac}(H_\omega) = \emptyset$ for every $\omega \in
\Omega$.
\end{theorem}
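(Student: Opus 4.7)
The plan is to reduce the theorem to Theorem~\ref{t.generalthm}. Arguing the contrapositive, I assume $f$ is continuous but $f\circ T^N$ is continuous on $[0,1)$ for every $N\ge 1$ (otherwise Theorem~\ref{t.generalthm} already gives the conclusion), and use the cycle hypothesis together with the minimality of $T$ (Proposition~\ref{p.keane1}) to force $f$ to be constant, contradicting non-constancy.

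Let the distinguished cycle be $v_1\to v_2\to\cdots\to v_m\to v_1$ with a single special edge, which after cyclic relabeling I may assume is the edge $v_1\to v_2$. Consider first the case in which this edge is $\tilde e_1$, so $v_1=0$, $v_2=\omega_{j^*}$ with $T_+\omega_{j^*}=0$, and the remaining edges are normal, giving $T_-v_i = T_+v_{i+1}$ for $2\le i\le m-1$ and $T_-v_m = T_+(0) = T(0)$. The vertices $v_2,\ldots,v_m$ are interior discontinuities of $T$, so continuity of $f\circ T$ at each $v_i$ yields $f(T_+v_i)=f(T_-v_i)$; combined with the edge identifications, this walks $f$ once around the cycle and produces
\[
  f(0) \;=\; f(T_+v_2) \;=\; f(T_-v_2) \;=\; f(T_+v_3) \;=\; \cdots \;=\; f(T_-v_m) \;=\; f(T(0)).
\]
The case in which the special edge is $\tilde e_2$ is symmetric. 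The cycle then passes through the endpoint $1$ but not through $0$ (since no normal edge can leave $0$), and continuity of $f\circ T$ at $v_p=\omega_{k^*}$ (with $T_-\omega_{k^*}=1$) forces $f(1^-):=\lim_{y\uparrow 1}f(y)$ to exist; an analogous walk yields $f(1^-)=f(T_-(1))$, where $T_-(1)\in[0,1)$ is the right endpoint of $T(I_r)$.

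To conclude, I iterate the cycle walk with $f$ replaced by $f\circ T^k$. Its hypotheses---continuity of $f\circ T^k$ and continuity of $f\circ T^{k+1}$ at the $v_i$---are exactly the standing assumption. The subtle point is that evaluating $(f\circ T^k)(T_\pm v_i)$ via one-sided limits in $\omega$ requires $T^k$ to be continuous at each of the points $0$, $T(0)$, $T_+v_i$, $T_-v_i$ appearing in the chain (and at the analogous points in the $\tilde e_2$ case). But by the Keane condition each such point lies in the forward $T$-orbit of $0$ or of some $\omega_j$, and these orbits never revisit the discontinuity set $\{\omega_1,\ldots,\omega_{r-1}\}$ at positive time, so $T^k$ is continuous where needed. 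The iterated identity then reads $f(T^k(0))=f(T^{k+1}(0))$ in the first case and $f(T^{k-1}(T_-(1)))=f(T^k(T_-(1)))$ in the second, and induction shows that $f$ is constant on the forward orbit of $0$ (resp.\ of $T_-(1)$). By Proposition~\ref{p.keane1} this orbit is dense in $[0,1)$, continuity of $f$ then forces $f$ to be constant, and this contradicts non-constancy. The main obstacle to executing this plan cleanly is exactly the endpoint-and-continuity bookkeeping above, which the Keane condition resolves.
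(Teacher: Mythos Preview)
Your argument is correct and follows essentially the same approach as the paper's: both assume $f\circ T^N$ is continuous for every $N\ge 1$, walk around the distinguished cycle using the edge identifications together with the Keane condition to obtain $f(T^n x)=f(T^{n+1} x)$ for all $n\ge 1$ at a base point, and then conclude constancy of $f$ via density of the forward orbit (Proposition~\ref{p.keane1}). The only difference is presentational---the paper records the identities $T^n_-v_1=T^n_+v_2$ (for normal edges) and $T^{n+1}_+\omega_j=T^n_+0$ (for $\tilde e_1$) once and for all and takes $\omega_j$ as base point, whereas you achieve the same effect by substituting $f\circ T^k$ for $f$, rerunning the walk, and using $0$ (resp.\ $T_-(1)$) as base point.
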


\begin{proof}
Our goal is to show that under the assumptions of the theorem,
every non-constant continuous sampling function satisfies
\eqref{f.maincond} for $N \ge 1$ and $\omega_d \in (0,1)$ suitable
and hence the assertion follows from Theorem~\ref{t.generalthm}.
So let us assume that $f$ is continuous and \eqref{f.maincond}
fails for all $N \ge 1$ and $\omega_d \in (0,1)$. We will show
that $f$ is constant. If there is a non-special edge from $v_1$ to
$v_2$, where $v_1 \not= v_2$, the Keane condition implies that for
every $n \ge 1$, $T^n$ will be continuous at the point $T_- v_1 =
T_+ v_2$ and hence $T^n_- v_1 = T^n_+ v_2$ for every $n \ge 1$,
where we generalize the previous definition as follows, $ T^n_-
\hat \omega = \lim_{\omega \uparrow \hat \omega} T^n \omega \quad
\text{ and } \quad T^n_+ \hat \omega = \lim_{\omega \downarrow
\hat \omega} T^n \omega. $ Next let us consider the special edges.
For the edge $\tilde e_1$ from $0$ to $\omega_j$ (for $j$
suitable), we have by construction and the Keane condition again
that $T^{n+1}_+ \omega_j = T^n_+ 0$ for $n \ge 1$. Likewise, for
the special edge $\tilde e_2$ from $\omega_k$ to $1$ (for $k$
suitable), we have $T^{n+1}_- \omega_k = T^n_- 1$ for $n \ge 1$.
By our assumption on $f$, the function $\omega \mapsto f(T^n
\omega)$ is continuous for every $n \ge 0$. In particular, we have
$f(T^n_- \omega_m) = f(T^n_+ \omega_m)$ for every $n \ge 0$ and $1
\le m \le r-1$. Consequently, if there is a path containing no
special edges connecting vertex $v_1$ to vertex $v_2$, then
$f(T_{\pm}^n(v_1))=f(T_{\pm}^n(v_2))$. On the other hand, if there
is a path containing just $\tilde e_1$ (but not $\tilde e_2$) from
$v_1$ to $v_2$, then $T_{\pm} v_1 = T_{\pm}^2 v_2$. Likewise, if
there is a path containing just $\tilde e_2$ from $v_1$ to $v_2$,
then $T_{\pm}^2(v_1)=T_{\pm}(v_2)$. Putting all these observations
together, we may infer that if there is a cycle containing exactly
one special edge and a vertex $\omega_j$, then $f(T^n \omega_j) =
f(T^{n+1} \omega_j)$ for every $n \ge 1$. The Keane condition
implies that the set $\{ T^n \omega_j : n \ge 1 \}$ is dense; see
Proposition~\ref{p.keane1}. Thus, the continuous function $f$ is
constant on a dense set and therefore constant.
\end{proof}

\begin{proof}[Proof of Theorem~\ref{t.sample}.]
Example~(i) above shows that Theorem~\ref{t.sample} is a special
case of Theorem~\ref{t.specialthm}.
\end{proof}

\noindent\textit{Remarks.} (i) Of course, it is necessary to
impose some condition on the permutation $\pi$ aside from
irreducibility in Theorem~\ref{t.specialthm}. It is known, and
discussed in the introduction, that the assertion of
Theorem~\ref{t.specialthm} fails for permutations corresponding to
rotations. Example~(ii) above shows the graph associated with such
permutations and demonstrates in which way the assumption of
Theorem~\ref{t.specialthm} fails in these cases.
\\[1mm]
(ii) There is another, more direct, way of seeing that
Theorem~\ref{t.generalthm} is not always applicable. Suppose that
the interval exchange transformation $T$ is not topologically
weakly mixing. This means that there is a non-constant continuous
function $g : \Omega \to \C$ and a real $\alpha$ such that $g(T
\omega) = e^{i \alpha} g(\omega)$ for every $\omega \in \Omega$.
At least one of $\Re g$ and $\Im g$ is non-constant. Consider the
case where $\Re g$ is non-constant, the other case is analogous.
Thus, $f = \Re g$ is a non-constant continuous function from
$\Omega$ to $\R$ such that
$
f(T^N \omega) = \Re g(T^N \omega) = \Re \left( e^{i \alpha}
g(T^{N-1} \omega) \right) = \cdots = \Re \left( e^{i N \alpha}
g(\omega) \right).
$
Since the right-hand side is continuous in $\omega$, so is the
left-hand side, and hence \eqref{f.maincond} fails for all $N \ge
1$ and $\omega_d \in (0,1)$. Nogueira and Rudolph proved that for
every permutation $\pi$ that does not generate a rotation and
Lebesgue almost every $\Lambda$, the transformation $T$ generated
by $(\pi,\Lambda)$ is topologically weakly mixing \cite{nr}. This
result was strengthened, as was pointed out earlier, by Avila and
Forni \cite{af}. However, there are cases where topological weak
mixing fails (see, e.g., \cite{h}) and hence the argument above
applies in these cases.

This example is even more striking, since if we write
$g(\omega) = \lambda e^{i \theta}$, we see that
$
 f(T^N \omega) = \lambda \Re (e^{i N \alpha + i \theta}) = \lambda \cos(\theta + N\alpha).
$
Hence the operator defined by \eqref{f.oper} is the almost Mathieu
operator and it has purely absolutely continuous spectrum for
$|\lambda| < 2$ for $\alpha$ irrational, and for all $\lambda$
if $\alpha$ is rational; see \cite{a} and references therein for earlier
partial results.

Let us briefly discuss the results above from a dynamical
perspective. To do so, we recall the definition of a \textit{Type
W} permutation; compare \cite[Definition 3.2]{cn}.

\begin{defi}
Suppose $\pi$ is an irreducible permutation of $r$ symbols. Define
inductively a sequence $\{a_k\}_{k = 0,\ldots, s}$ as follows. Set
$a_0 = 1$. If $a_k \in \{ \pi^{-1}(1) , r + 1\}$, then set $s = k$
and stop. Otherwise let $a_{k+1} = \pi^{-1}(\pi(a_k) - 1) + 1$.
The permutation $\pi$ is of \textit{Type W} if $a_s =
\pi^{-1}(1)$.
\end{defi}

It is shown in \cite[Lemma~3.1]{cn} that the process indeed
terminates and hence $a_s$ is well defined. Observing
\cite[Theorem~3.5]{cn}, \cite[Theorem~1.11]{metric} then reads as
follows:

\begin{theorem}[Veech 1984]
If $\pi$ is of \textit{Type W}, then for almost every $\Lambda$,
the interval exchange transformation associated with
$(\pi,\Lambda)$ is weakly mixing.
\end{theorem}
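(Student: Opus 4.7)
The statement is presented as a reformulation of a classical result, so my plan would not be to reprove weak mixing from scratch but rather to verify the translation of hypotheses. The proof should have two logical layers: an outer layer that matches the combinatorial notion of \emph{Type W} to the hypothesis that appears in Veech's original metric weak mixing theorem \cite{metric}, and an inner layer that is simply the citation of Veech's theorem itself.

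More concretely, I would first recall that Veech's Theorem~1.11 in \cite{metric} establishes weak mixing for almost every $\Lambda$ under a combinatorial hypothesis on the permutation $\pi$, phrased in terms of a certain admissibility condition on the Rauzy class of $\pi$. The point of the paper \cite{cn} of Chaves--Nogueira is to repackage Veech's admissibility condition as the purely syntactic iteration defining the sequence $a_0,a_1,\ldots,a_s$ in the Definition above: their Theorem~3.5 asserts precisely that $\pi$ lies in the class treated by Veech if and only if the terminating value $a_s$ equals $\pi^{-1}(1)$, i.e., $\pi$ is of Type W in the sense just defined. So the first step of the proof is: verify (by quoting \cite[Theorem~3.5]{cn}) that the hypothesis ``$\pi$ is of Type W'' is equivalent to the combinatorial hypothesis appearing in \cite[Theorem~1.11]{metric}.

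The second step is then simply to invoke Veech's theorem: under this equivalent combinatorial hypothesis, for Lebesgue almost every length data $\Lambda \in \Delta^r$, the interval exchange transformation $T$ determined by $(\pi,\Lambda)$ is weakly mixing with respect to its unique invariant probability measure (unique ergodicity holding almost surely by Masur--Veech). Well-definedness of $a_s$, which one would want to check before the citation of Veech's theorem makes sense, is handled by \cite[Lemma~3.1]{cn} and is mentioned explicitly in the excerpt.

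The genuine mathematical content — and hence the main obstacle if one were to reprove the statement rather than cite it — lies entirely inside Veech's theorem. That proof proceeds by analyzing the Rauzy--Veech renormalization cocycle on the space of IETs with combinatorics in a given Rauzy class, identifying eigenvalue equations for $T$ with a cohomological equation for this cocycle, and then using hyperbolicity properties of the renormalization together with the combinatorial Type~W hypothesis to exclude non-trivial solutions for almost every $\Lambda$. Since this machinery is lengthy and is not developed in the present paper, I would leave it to the citations and present the statement above as a direct corollary of \cite[Theorem~1.11]{metric} and \cite[Theorem~3.5]{cn}.
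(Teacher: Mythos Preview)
Your proposal is correct and matches the paper exactly: the paper gives no proof of this theorem, presenting it purely as a restatement of \cite[Theorem~1.11]{metric} via the equivalence in \cite[Theorem~3.5]{cn}, just as you describe. There is nothing to add.
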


We have the following related result, which has a weaker
conclusion but a weaker, and in particular explicit, assumption:

\begin{coro}
Every interval exchange transformation satisfying the Keane
condition and associated to a Type~W permutation is topologically
weakly mixing.
\end{coro}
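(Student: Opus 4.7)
The plan is to deduce the corollary from (the proof of) Theorem~\ref{t.specialthm} together with the computation in the second remark following Theorem~\ref{t.sample}. The only substantive new step is combinatorial: showing that every Type~W permutation produces a graph $G$ containing a cycle with exactly one special edge. Once that is established, the proof of Theorem~\ref{t.specialthm} gives that any continuous $f$ for which $f\circ T^n$ is continuous on $\Omega$ for every $n\ge 1$ is forced to be constant, and the remark in question converts this into topological weak mixing by contrapositive.

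To carry out the combinatorial step, I set $\omega_0:=0$ and $\omega_r:=1$ for uniformity and write $c_l:=\sum_{\pi(i)\le l}\lambda_i$. One checks directly from the definitions that each vertex of $G$ has in-degree and out-degree equal to one, so $G$ is a disjoint union of cycles; and that the regular incoming edge to $\omega_j$ (for $1\le j\le r-1$) comes from $\omega_{\pi^{-1}(\pi(j+1)-1)}$ whenever $\pi(j+1)>1$, while the case $\pi(j+1)=1$ (i.e.\ $j=\pi^{-1}(1)-1$) is precisely when the incoming edge is the special edge $\tilde e_1$ from $\omega_0$. Substituting $j=a_k-1$, the recursion $a_{k+1}=\pi^{-1}(\pi(a_k)-1)+1$ is exactly the statement that $\omega_{a_{k+1}-1}$ is the regular backward neighbor of $\omega_{a_k-1}$ in $G$. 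Beginning at $\omega_{a_0-1}=\omega_0$ and iterating, the Type~W termination $a_s=\pi^{-1}(1)$ means that after $s$ regular back-steps we arrive at $\omega_{\pi^{-1}(1)-1}$, whose remaining incoming edge is $\tilde e_1$, which returns us to $\omega_0$ and closes a cycle containing $\tilde e_1$ and no other special edge. (The alternative termination, $a_s=r+1$, would instead bring us to $\omega_r=1$ and then back along $\tilde e_2$, so both special edges would lie on the cycle through $\omega_0$, and no cycle would have exactly one special edge.)

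With the combinatorial step in hand, suppose for contradiction that $T$ is not topologically weakly mixing. Then there exist a non-constant continuous $g:\Omega\to\C$ and $\alpha\in\R$ with $g(T\omega)=e^{i\alpha}g(\omega)$, and at least one of $f:=\Re g$, $\Im g$ is non-constant; the identity $f(T^N\omega)=\Re(e^{iN\alpha}g(\omega))$ shows that $f\circ T^N$ is continuous for every $N\ge 1$. The proof of Theorem~\ref{t.specialthm} --- which under the cycle condition shows that such an $f$ must be constant --- then yields a contradiction. The main obstacle I expect is the careful bookkeeping in the combinatorial step: one must verify that the $a_k$'s visit pairwise distinct vertices (which follows because the process terminates in finitely many steps by \cite[Lemma~3.1]{cn}) and that no intermediate back-edge coincides with $\tilde e_2$ (which would require $a_k=r+1$ at an intermediate step, contradicting the definition of the termination index).
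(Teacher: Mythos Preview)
Your proposal is correct and follows essentially the same approach as the paper: identify the Type~W recursion $a_{k+1}=\pi^{-1}(\pi(a_k)-1)+1$ with the process of tracing back along the edges of the cycle through the vertex~$0$ in $G$, conclude that this cycle contains only the special edge $\tilde e_1$, and then invoke (the proof of) Theorem~\ref{t.specialthm} to rule out non-constant continuous eigenfunctions. Your combinatorial bookkeeping is more explicit than the paper's (which simply asserts that the $\{a_k\}$ procedure ``is the same as determining the arrows in the cycle containing the vertex~$0$''), and your parenthetical remark about the $a_s=r+1$ case even sketches the converse implication that the paper states; one small point is that you state the backward-neighbor formula only for $1\le j\le r-1$ but then apply it starting from $j=0$, though the $j=0$ case follows identically from the remark after the definition of~$G$.
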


\begin{proof}
We claim that $\pi$ is Type W if and only if the associated graph
$G$ has a cycle that contains exactly one special edge. Assuming
this claim for a moment, we can complete the proof as follows.
Suppose $\Lambda$ is such that the Keane condition holds for the
interval exchange transformation $T$ associated with
$(\pi,\Lambda)$. We may infer from the proof of
Theorem~\ref{t.specialthm} that for every non-constant continuous
function $f$, there is an $n \ge 1$ such that $f \circ T^n$ is
discontinuous. This shows that $T$ does not have any continuous
eigenfunctions and hence it is topologically weakly mixing. Thus,
it suffices to prove the claim. Note that the procedure generating
the sequence $\{a_k\}_{k = 0,\ldots, s}$ is the same as
determining the arrows in the cycle containing the vertex $0$ in
the graph $G$ associated with $\pi$. Thus, if $\pi$ is not Type W,
then before we close the cycle containing the vertex $0$, it is
identified to the vertex $1$. This necessitates that this cycle
contains both special edges. On the other hand, if $\pi$ is Type
W, then the cycle containing the vertex $0$ closes up with just
one special edge.
\end{proof}

Recall now that a continuous map $S: Y \to Y$ is said to be a
topological factor of $T: [0,1) \to [0,1)$ if there is a
surjective continuous map $\pi: [0,1) \to Y$ such that $\pi \circ
T = S \circ \pi$. $T$ is called topologically prime if it has no
topological factors. If $T$ has a factor, we can find a continuous
function $f: [0,1) \to \R$ such that also $f \circ T^n$ is
continuous, by setting $f = g \circ \pi$ for some continuous $g: Y
\to \R$. Hence, Theorem~\ref{t.specialthm} also has the following
consequence:

\begin{coro}\label{cor:veech}
 Every interval exchange transformation satisfying the Keane
 condition and associated to a Type~W permutation is topologically
 prime.
\end{coro}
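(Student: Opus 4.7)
The plan is to argue by contraposition, following the template suggested in the paragraph just before Corollary~\ref{cor:veech}. I assume that $T$ admits a non-trivial topological factor $S : Y \to Y$ via a continuous surjection $\varphi : [0,1) \to Y$ with $\varphi \circ T = S \circ \varphi$, and derive a contradiction with the argument in the proof of Theorem~\ref{t.specialthm}.

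The first step is to manufacture a non-constant continuous function $f : [0,1) \to \R$ all of whose $T$-iterates are also continuous. Since the factor is non-trivial, $Y$ contains at least two distinct points; invoking the separation properties of $Y$ (standard for factor spaces, e.g., compact metric, or Hausdorff with enough separation for Urysohn) yields a continuous $g : Y \to \R$ taking different values at these two points. Set $f := g \circ \varphi$. Then $f$ is continuous and non-constant, and for every $n \ge 1$ we have $f \circ T^n = g \circ \varphi \circ T^n = g \circ S^n \circ \varphi$, which is continuous because $S^n$ and $\varphi$ are.

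The second step is to invoke the argument inside the proof of Theorem~\ref{t.specialthm}. The Type~W hypothesis is equivalent, as verified in the previous corollary, to the graph $G$ associated with $T$ having a cycle containing exactly one special edge. Re-examining that proof shows that the continuity of all $f \circ T^n$, $n \ge 1$, together with such a cycle, forces $f(T^n \omega_j) = f(T^{n+1} \omega_j)$ for a suitable vertex $\omega_j$ on the cycle and every $n \ge 1$. Proposition~\ref{p.keane1} then guarantees that $\{T^n \omega_j : n \ge 1\}$ is dense in $[0,1)$, so $f$ is constant on a dense set and hence constant; this contradicts the non-constancy established in the first step, and so $T$ has no non-trivial topological factor.

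The main obstacle I anticipate is the very first step: ensuring that the factor space $Y$ admits a continuous non-constant real-valued function. This is automatic under any of the standard regularity hypotheses customarily placed on $Y$ (e.g., compact metric, or Hausdorff with enough separation to apply Urysohn's lemma), and is implicit in the paper's informal definition of topological factor. Once this is in hand, the dynamical core of the argument is lifted verbatim from the proof of Theorem~\ref{t.specialthm}, so no new interval-exchange estimates are required.
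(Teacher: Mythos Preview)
Your argument is correct and follows exactly the route the paper indicates in the paragraph preceding Corollary~\ref{cor:veech}: pull back a non-constant continuous $g$ from the factor to obtain $f=g\circ\varphi$ with all $f\circ T^n$ continuous, then invoke the argument from the proof of Theorem~\ref{t.specialthm} (via the Type~W/graph equivalence established in the preceding corollary) to force $f$ constant. The only caveat you raise---that $Y$ must carry a non-constant continuous real function---is handled exactly as you say, by the standing convention that factor spaces are compact metric (or at least Hausdorff), and the paper leaves this implicit as well.
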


Let us now return to our discussion of sufficient conditions for
the absence of absolutely continuous spectrum, presenting results
 that hold under additional assumptions on the sampling function $f$.

\begin{theorem}
Assume that the graph $G$ contains a path with $\ell$ vertices
corresponding to distinct discontinuities. Then if the continuous function
$f$ is such that for every $x \in \R$, the set $f^{-1}(\{x\})$ has
at most $\ell-1$ elements, we have $\Sigma_\mathrm{ac} =
\emptyset$ for any $T$ satisfying the Keane condition.
\end{theorem}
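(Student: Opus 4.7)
The plan is to reduce to Theorem~\ref{t.generalthm}, which only requires that $f \circ T^N$ be discontinuous for some $N \ge 1$. I would argue by contradiction: assume $f \circ T^n$ is continuous for every $n \ge 1$, which is equivalent to $f(T^n_- \omega_j) = f(T^n_+ \omega_j)$ at each interior endpoint $\omega_j$ of $T$ and each $n \ge 1$, and aim to derive that $f$ has a value with $\ell$ distinct preimages.

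Let $v_1 \to v_2 \to \cdots \to v_\ell$ denote the hypothesized path in $G$, whose vertices are $\ell$ pairwise distinct discontinuities. Since each special edge $\tilde e_1, \tilde e_2$ is incident to $0$ or $1$, no special edge can appear in the path, and each edge $v_i \to v_{i+1}$ gives $T_- v_i = T_+ v_{i+1} = T v_{i+1}$ by the definition of $G$ and right-continuity of $T$. The Keane condition ensures that $T v_{i+1}$ is not itself a discontinuity, so $T^n$ is continuous at $T v_{i+1}$ for every $n \ge 0$, and therefore
\[
  T^{n+1}_- v_i \;=\; T^n(T_- v_i) \;=\; T^n(T v_{i+1}) \;=\; T^{n+1} v_{i+1},
\]
while $T^{n+1}_+ v_i = T^{n+1} v_i$. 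Combining these with the continuity of $f \circ T^{n+1}$ at $v_i$ yields
\[
  f(T^{n+1} v_i) \;=\; f(T^{n+1}_+ v_i) \;=\; f(T^{n+1}_- v_i) \;=\; f(T^{n+1} v_{i+1}).
\]
Iterating along the path and taking $n = 0$ produces $f(T v_1) = f(T v_2) = \cdots = f(T v_\ell)$.

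The Keane condition further asserts that the orbits $\{T^m \omega_j\}_{m \in \Z}$ for distinct $j$ are mutually disjoint, so the points $T v_1, \ldots, T v_\ell$ are pairwise distinct. This exhibits $\ell$ distinct elements of $f^{-1}(\{f(T v_1)\})$, contradicting the standing hypothesis $|f^{-1}(\{x\})| \le \ell - 1$. Hence some $f \circ T^N$ is discontinuous, and Theorem~\ref{t.generalthm} gives $\sigma_\mathrm{ac}(H_\omega) = \emptyset$ for every $\omega \in \Omega$. The only delicate point is the $T_\pm$ bookkeeping when passing $T^n$ through a one-sided limit at a discontinuity; that the intermediate points $T^k v_i$ never land on another discontinuity (so continuity may actually be exploited) is exactly what the Keane condition buys us, so no further estimate is needed.
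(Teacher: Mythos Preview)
Your argument is correct and follows essentially the same strategy as the paper's proof: both reduce to Theorem~\ref{t.generalthm} by assuming $f\circ T^n$ is continuous for all $n\ge 1$, then use the edge relations in $G$ together with the Keane condition to manufacture $\ell$ distinct points carrying a common $f$-value. Your write-up is in fact more explicit than the paper's --- you spell out why the path cannot contain a special edge (since its vertices are all genuine discontinuities $\omega_j$, while $\tilde e_1,\tilde e_2$ are incident to $0$ or $1$), and you track the $T_\pm$ limits carefully to arrive at $f(Tv_1)=\cdots=f(Tv_\ell)$, whereas the paper simply refers back to the computations in the proof of Theorem~\ref{t.specialthm}.
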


\begin{proof}
Denote by $\widetilde{\Omega}$ the set of $\ell$ distinct vertices,
from the assumption of the theorem. If \eqref{f.maincond} fails, we see as in the
proof of the last theorem that $f(\omega) \in \{ f(\omega'), f(T^{-1}\omega'), f(T \omega' ) \}$ for
$\omega, \omega' \in \widetilde{\Omega}$. By the Keane condition this contradicts our
assumption of the preimage of a single point under $f$ containing
a maximum of $\ell - 1$ points.
\end{proof}

\begin{theorem}\label{t.lip}
Suppose that $f$ is non-constant and Lipschitz continuous and $T$
is an interval exchange transformation for which $f \circ T^n$ is
continuous for every $n$. Then, $T$ is not measure theoretically weakly mixing.
\end{theorem}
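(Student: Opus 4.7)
The plan is to show that the orbit $\{f\circ T^{n}:n\in\Z\}$ is relatively compact in $L^{2}(\Omega,\mu)$ for any $T$-invariant probability measure $\mu$ (for example Lebesgue measure); standard spectral theory then places $f$ in the closed linear span of eigenvectors of the Koopman operator, and non-constancy of $f$ supplies a non-trivial eigenvalue.

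The heart of the matter is a uniform Lipschitz estimate: if $f$ is $L$-Lipschitz and $f\circ T^{n}$ is continuous, then $f\circ T^{n}$ is itself $L$-Lipschitz on $[0,1)$, with the \emph{same} constant $L$ for every $n$. Since $T$ is a piecewise translation, each iterate $T^{n}$ is as well: there is a finite partition $0=p_{0}<p_{1}<\cdots<p_{k_{n}}=1$ and constants $c_{i}$ with $T^{n}(x)=x+c_{i}$ on $(p_{i},p_{i+1})$. On each piece, $x\mapsto f(x+c_{i})$ is $L$-Lipschitz, since translation preserves the Lipschitz constant. The continuity of $f\circ T^{n}$ at each breakpoint $p_{i}$ forces $f(p_{i}+c_{i-1})=f(p_{i}+c_{i})$, and a short telescoping across the breakpoints lying between any two arguments $x<y\in[0,1)$ then yields $|f(T^{n}x)-f(T^{n}y)|\le L|x-y|$ globally, uniformly in $n$.

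With this bound in hand, the family $\{f\circ T^{n}\}$ is uniformly bounded by $\|f\|_{\infty}$ and equicontinuous. Extending each function continuously to $[0,1]$, Arzel\`a--Ascoli yields relative compactness in $C([0,1])$, hence in $L^{2}(\Omega,\mu)$. Let $U\colon g\mapsto g\circ T$ be the Koopman operator on $L^{2}(\Omega,\mu)$. The relative compactness of $\{U^{n}f\}$ is equivalent -- via the Jacobs--de Leeuw--Glicksberg decomposition, or equivalently through the spectral theorem (the scalar spectral measure of $f$ is then purely atomic) -- to $f$ lying in the closed linear span of eigenvectors of $U$. Were $(T,\mu)$ weakly mixing, that span would reduce to the constants, forcing the continuous non-constant $f$ to be $\mu$-a.e.~constant on $\supp\mu$, which is impossible on any minimal component of $T$. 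The main obstacle is the Lipschitz telescoping of the first step; the remainder is a routine application of Koopman spectral theory.
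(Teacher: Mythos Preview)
Your proof is correct. You and the paper share the same first key step: the uniform Lipschitz bound $|f(T^{n}x)-f(T^{n}y)|\le L\,|x-y|$, obtained by telescoping across the breakpoints of $T^{n}$, using that $f\circ T^{n}$ matches from the two sides at each breakpoint while $T^{n}$ is an isometry on each continuity interval.

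After that the routes diverge. The paper argues directly with the definition of weak mixing: it chooses $c>1$, $E$, $d$ with $\tfrac{1}{10}<\mathrm{Leb}(\{|f-E|<d\})<\mathrm{Leb}(\{|f-E|<cd\})<\tfrac12$, and the uniform Lipschitz bound forces every $(T\times T)^{n}$-image of the width-$(c-1)d/L$ neighbourhood of the diagonal to miss the positive-measure rectangle $\{|f-E|<d\}\times\{|f-E|\ge cd\}$; hence $T\times T$ is not Lebesgue-ergodic. Your route is more structural: Arzel\`a--Ascoli on the equicontinuous family $\{f\circ T^{n}\}$ gives relative compactness of the Koopman orbit of $f$ in $L^{2}$, and the Jacobs--de~Leeuw--Glicksberg decomposition (equivalently, discreteness of the scalar spectral measure) then places $f$ in the Kronecker subspace, so a non-constant $f$ forces a nontrivial eigenvalue. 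The paper's version is entirely self-contained and uses nothing beyond the product-ergodicity characterisation of weak mixing; yours is cleaner once one is willing to invoke the spectral machinery, and in fact yields the slightly stronger conclusion that $f$ itself is an almost periodic vector for the Koopman operator. One cosmetic point: the appeal to ``any minimal component'' at the end is unnecessary if you fix $\mu=\mathrm{Leb}$ from the outset (as the paper does), since $\mathrm{supp}\,\mathrm{Leb}=[0,1)$ and a continuous function that is Lebesgue-a.e.\ constant is constant.
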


\begin{proof}
By assumption $f$ satisfies $|f(x) - f(y)| \leq k |x - y|$ for all
$x, y$. Assume that $f \circ T^n$ is continuous for every $n$.
Consider $x,y \in \Omega$ with $x<y$. Denote by $\delta_1 < \cdots
< \delta_t$ the discontinuities of $T^n$ between $x$ and $y$.
Then,
\begin{align*}
|f(T^n(x)) - f(T^n(y))| & \leq |f(T^n(x)) - f(T^n_-(\delta_1))| +
|f(T^n_-(\delta_1) - f(T^n_+(\delta_1))| \\
& \qquad + |f(T^n_+(\delta_1)) - f(T^n_-(\delta_2))|+ \cdots + |f(T_+(\delta_t)) - f(T^n(y))| \\
& = |f(T^n(x)) - f(T^n_-(\delta_1))| + |f(T^n_+(\delta_1)) -
f(T^n_-(\delta_2))| \\
& \qquad + \cdots + |f(T_+(\delta_t)) - f(T^n(y))| \\
& \le k(|T^n(x) - T^n_-(\delta_1)| + |T^n_+(\delta_1) -
T^n_-(\delta_2)| \\
& \qquad + \cdots + |T_+^n(\delta_t) - T^n(y)|) \\
& = k ( |x - \delta_1| + |\delta_1 - \delta_2| + \cdots + |\delta_t - y|)  = k (y-x),
\end{align*}
where we used in the first equality that $f(T_-^n(\delta_i)) =
f(T_+^n(\delta_i))$ since $f \circ T^n$ is continuous, in the
second inequality that $f$ is $k$-Lipschitz, and in the second
equality that $T^n$ is an isometry on $(\delta_i,\delta_{i+1})$.
So we have seen that, we also have that $|f(T^n x) - f(T^n y)|
\leq k |x - y|$. We now proceed to show that $T$ cannot be weak
mixing. Since $f$ is continuous and non-constant, there exist
$c>1$, $E$, $d$, such that $ \frac{1}{10} < \mathrm{Leb}
(\{x:|f(x)-E|<d\}) < \mathrm{Leb} (\{x:|f(x)-E|<cd\}) <
\frac{1}{2}. $ For any $\omega$ with $T^n(\omega) \in
\{x:|f(x)-z|<d\}$ and any $\ti{\omega} \in
B(\omega,\frac{(c-1)d}{k})$, we compute
$$
|f(T^n \ti{\omega}) - E| \leq |f(T^n \omega) - E| + |f(T^n \omega)
- f(T^n \ti{\omega})| < d + k \cdot \frac{(c-1) d}{k}  = cd,
$$
where we used that $f \circ T^n$ is $k$-Lipschitz continuous by
the last lemma. So, we have that $T^n \ti{\omega} \in \{x:|f(x)-E|
< cd\}$. This implies that for the set $ \left\{ (\omega,
\ti{\omega}) : \ti{\omega} \in B\left( \omega,\frac{(c-1)d}{k}
\right) \right\} \subseteq \Omega\times\Omega, $ which has
positive measure, we have that its image under $T^n \times T^n$
never intersects the set $\{\omega :|f(\omega)-E|<d\} \times
\{\ti{\omega}:|f(\ti{\omega})-E| \geq cd\}$, which also has
positive measure. Hence, $T \times T$ cannot be ergodic, and so
$T$ is not weakly mixing.
\end{proof}

\noindent \textit{Remark.} We only use that T is a piecewise
isometry and $f \circ T^n$ is continuous for all $n$. In
particular, the argument extends to rectangle exchange
transformations and interval translation maps.

\begin{proof}[Proof of Theorem~\ref{t.sample2}.]
The result follows from Theorems~\ref{t.generalthm} and
\ref{t.lip}.
\end{proof}

Let us say that a function $f$ has a non-degenerate maximum at
$\omega^{\mathrm{max}}$ if for any $\varepsilon > 0$, there is a
$\delta > 0$ such that \be\label{eq:condmax}
 f(\omega) \geq f(\omega^{\mathrm{max}}) - \delta \Rightarrow |\omega - \omega^{\mathrm{max}}| \leq \varepsilon.
\ee

\begin{theorem} \label{max}
Let $\pi$ be a non-trivial permutation.
Assume that $T = (\pi, \Lambda)$ satisfies the Keane condition and that $f$ has a
non-degenerate maximum at $\omega^{\mathrm{max}}$ and a continuous
bounded derivative. Then, \eqref{f.maincond} holds. In particular,
we have $\sigma_\mathrm{ac}(H_\omega) = \emptyset$ for every
$\omega \in \Omega$.
\end{theorem}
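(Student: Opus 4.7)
The plan is to verify the condition \eqref{f.maincond} by contradiction; the statement about absolutely continuous spectrum will then follow from Theorem~\ref{t.generalthm}.

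First I would assume for contradiction that $f\circ T^N$ is continuous on $[0,1)$ for every $N\ge 1$. Because $f$ has a continuous bounded derivative, $f$ is $k$-Lipschitz with $k=\|f'\|_\infty<\infty$, and the telescoping estimate used in the proof of Theorem~\ref{t.lip} then shows that each $f\circ T^N$ is $k$-Lipschitz on $[0,1)$ as well, with the same constant.

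Since $\pi$ is non-trivial, $T$ has at least one discontinuity $\omega_d\in(0,1)$ with $a:=T_-\omega_d\neq T_+\omega_d=:b$. The Keane condition prevents the forward orbits of $a$ and $b$ from hitting a discontinuity of $T$, so continuity of $f\circ T^{m+1}$ at $\omega_d$ delivers
\[
f(T^m a)=f(T^m b)\qquad\text{for every } m\ge 0.
\]
Using density of the orbit $\{T^m a\}$ guaranteed by Proposition~\ref{p.keane1}, I would then pick a subsequence $m_k\to\infty$ with $T^{m_k}a\to\omega^{\max}$; the equality of $f$-values together with the non-degenerate maximum condition~\eqref{eq:condmax} forces $T^{m_k}b\to\omega^{\max}$, so that $|T^{m_k}a-T^{m_k}b|\to 0$.

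The contradiction is to be extracted from the piecewise-translation structure of $T^{m_k}$. On each maximal interval of continuity, $T^{m_k}$ is an isometry, so if $a$ and $b$ lay in a common piece of $T^{m_k}$ for infinitely many $k$, then $|T^{m_k}a-T^{m_k}b|=|a-b|>0$ along that subsequence, contradicting the convergence to zero. Hence for all large $k$ some discontinuity of $T^{m_k}$ must separate $a$ from $b$, and if $c_k,\,c_k'$ denote the translation constants of $T^{m_k}$ on the pieces containing $a,\,b$ respectively, then
\[
a+c_k\longrightarrow\omega^{\max},\qquad b+c_k'\longrightarrow\omega^{\max},\qquad c_k-c_k'\longrightarrow a-b\neq 0.
\]

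The hard part will be closing the argument at this last stage: one must show that the combinatorial structure of the iterates $T^{m_k}$---encoded in the graph $G$ attached to the non-trivial permutation $\pi$ in Section~\ref{s.iet}---is incompatible with the three simultaneous limit relations above. I expect the argument to parallel the graph-theoretic computation used in the proof of Theorem~\ref{t.specialthm}, with the quantitative control furnished by the non-degenerate maximum and the $k$-Lipschitz bound on $f\circ T^N$ replacing the mere non-constancy of $f$ exploited there.
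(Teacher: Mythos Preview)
Your proposal has a genuine gap at the closing stage, and the completion you sketch will not work.

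Up through the line $|T^{m_k}a - T^{m_k}b|\to 0$ everything is fine; indeed, from $f(T^{m_k}a)=f(T^{m_k}b)$ and the mean value theorem you can even conclude $f'(\omega^{\max})=0$ --- but that is no contradiction at a maximum. The problem is the final paragraph. You propose to rule out the three limit relations $a+c_k\to\omega^{\max}$, $b+c_k'\to\omega^{\max}$, $c_k-c_k'\to b-a$ by the graph combinatorics of Theorem~\ref{t.specialthm}. But that theorem has an explicit hypothesis on $G$ (a cycle containing exactly one special edge, equivalently $\pi$ of Type~W) which is \emph{not} assumed here; for a general non-trivial $\pi$ there is no combinatorial obstruction to $|T^{m_k}a-T^{m_k}b|\to 0$, and no contradiction can be extracted from the dynamics alone. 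The uniform Lipschitz bound on $f\circ T^N$ that you record also plays no role in closing this gap.

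The paper's argument is structurally different: rather than looking for a dynamical contradiction, it shows $f'(\tilde\omega)=0$ for \emph{every} $\tilde\omega$, contradicting the non-degeneracy of the maximum. The mechanism is this. Given $\tilde\omega$, one first uses minimality to pick $l_k$ with $T^{l_k}\omega^{\max}\to\tilde\omega$. One then chooses $n_k$ so that $T^{n_k}\omega^{\mathrm{disc}}$ is very close to $\omega^{\max}$; continuity of $f\circ T^{n_k}$ forces $f(T^{n_k}\omega)$ to be close to $f(\omega^{\max})$ for $\omega$ just to the \emph{left} of $\omega^{\mathrm{disc}}$ as well, and the non-degenerate maximum then forces $T^{n_k}\omega$ itself to be close to $\omega^{\max}$. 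Pushing both by $T^{l_k}$ gives two sequences $x_k=T^{n_k+l_k}\omega_k$ and $y_k=T^{n_k+l_k}\omega^{\mathrm{disc}}$ converging to $\tilde\omega$. The crucial point --- the one missing from your plan --- is that $T$ sends $(\omega^{\mathrm{disc}}-\varepsilon_k,\omega^{\mathrm{disc}})$ and $\omega^{\mathrm{disc}}$ to \emph{disjoint} intervals, and $T^{n_k+l_k-1}$ is an isometry on each, so $|x_k-y_k|\ge c(k)>0$ for some explicit $c(k)$. Choosing $\omega_k$ so that $|f(x_k)-f(y_k)|\le c(k)/k$ (possible by continuity of $f\circ T^{n_k+l_k}$), the secant slope is at most $1/k$; since $x_k,y_k\to\tilde\omega$ and $f'$ is continuous, $f'(\tilde\omega)=0$.

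In short, the missing idea is the transport step $T^{l_k}:\omega^{\max}\rightsquigarrow\tilde\omega$ together with the separation estimate $|x_k-y_k|\ge c(k)$, which jointly convert the equality of $f$-values across the discontinuity into vanishing of $f'$ at an \emph{arbitrary} point.
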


\begin{proof}
First, it follows from Keane's condition that we can assume (by
possibly replacing $T$ with $T^{-1}$) that $\omega^{\mathrm{max}}$
is not a discontinuity point of $T^n$, $n\geq 1$. Assume that
\eqref{f.maincond} fails, and fix a discontinuity point
$\omega^{\mathrm{disc}}$ of $T$. Our goal is to show that $f$ is
constant, which of course contradicts the assumption that the
maximum of $f$ at $\omega^{\mathrm{max}}$ is non-degenerate. We do
this by showing that $f'(\tilde \omega) = 0$ for every $\tilde
\omega \in \Omega$. So let $\tilde \omega \in \Omega$ be given. By
Proposition~\ref{p.keane1}, there is a sequence $l_k \to \infty$
such that $T^{l_k} \omega^{\mathrm{max}} \to \tilde \omega$. By
\eqref{eq:condmax}, we can find $\delta_k > 0$ such
 that $|f(\omega) - f(\omega^{\mathrm{max}})| \le \delta_k$ implies
\begin{equation}\label{f.11}
  |\omega - \omega^{\mathrm{max}}| \leq \frac{1}{k}.
\end{equation}
Applying Proposition~\ref{p.keane1} again, we can find a sequence
$n_k \to \infty$ such that $|T^{n_k} \omega^{\mathrm{disc}} -
\omega^{\mathrm{max}}| \leq \frac{\delta_k}{2 \|f'\|_\infty}$ and
hence $|f(T^{n_k} \omega^{\mathrm{disc}}) -
f(\omega^{\mathrm{max}})| \leq \frac{\delta_k}{2} $ for every $k
\geq 1$. Since \eqref{f.maincond} fails, we can find
$\varepsilon_k
> 0$ such that
$|f(T^{n_k} \omega) - f(T^{n_k} \omega^{\mathrm{disc}}) | \le
\frac{\delta_k}{2} $ for every $\omega \in (\omega^{\mathrm{disc}}
- \varepsilon_k, \omega^{\mathrm{disc}})$. Combining these two
estimates, we find $|f(T^{n_k} \omega) - f(\omega^{\mathrm{max}})|
\le \delta_k,  \quad \omega \in (\omega^{\mathrm{disc}} -
\varepsilon_k, \omega^{\mathrm{disc}})$. Thus, by \eqref{f.11}, $
|T^{n_k} \omega - \omega^{\mathrm{max}}| \le \frac{1}{k},  \quad
\omega \in (\omega^{\mathrm{disc}} - \varepsilon_k,
\omega^{\mathrm{disc}}). $ Hence, for any choice of $\omega_k \in
(\omega^{\mathrm{disc}} - \varepsilon_k, \omega^{\mathrm{disc}})$,
we have $T^{n_k} \omega_k \to \omega^{\mathrm{max}}$ and $T^{n_k}
\omega^{\mathrm{disc}} \to \omega^{\mathrm{max}}$ as $k \to
\infty$. Since $T^m$ is continuous at $\omega^{\mathrm{max}}$ for
every $m \geq 1$, we can achieve by passing to a subsequence of
$n_k$ (and hence also of $\omega_k$ and $\varepsilon_k$, but
keeping $l_k$ fixed) that
\begin{equation}\label{f.conv}
T^{n_k + l_k} \omega_k \to \ti{\omega}, \qquad T^{n_k + l_k}
\omega^{\mathrm{disc}} \to \ti{\omega}
\end{equation}
as $k \to \infty$. By possibly making $\varepsilon_k$ smaller, it
follows from Keane's condition that
\begin{equation}\label{f.12}
  \inf_{\omega \in (\omega^{\mathrm{disc}} - \varepsilon_k, \omega^{\mathrm{disc}})}
  |T^{n_k + l_k} \omega - T^{n_k + l_k} \omega^{\mathrm{disc}}| \equiv c(k) >
  0.
\end{equation}
Indeed, for small enough $\varepsilon_k$, $T^{n_k + l_k - 1}:
\ol{T(\omega^{\mathrm{disc}} - \varepsilon_k,
\omega^{\mathrm{disc}})} \to [0,1)$ is a continuous map and
$T\omega^{\mathrm{disc}} \notin \ol{T(\omega^{\mathrm{disc}} -
\varepsilon_k, \omega^{\mathrm{disc}})}$. Hence their images under
$T^{n_k+l_k -1}$ are also disjoint. Now, since \eqref{f.maincond}
fails and \eqref{f.12} holds, we can choose $\omega_k \in
(\omega^{\mathrm{disc}} - \varepsilon_k, \omega^{\mathrm{disc}})$
such that $|f(T^{n_k + l_k} \omega_k) - f(T^{n_k + l_k}
\omega^{\mathrm{disc}})| \leq \frac{c(k)}{k}$ and $|T^{n_k + l_k}
\omega_k - T^{n_k + l_k} \omega^{\mathrm{disc}}| \geq c(k)$ hold.
By \eqref{f.conv}, $T^{n_k + l_k} \omega_k \to \tilde \omega$ and
$T^{n_k + l_k} \omega^{\mathrm{disc}} \to \tilde \omega$. It
therefore follows that
 $$
  |f'(\tilde \omega)| = \lim_{k \to \infty} \frac{|f(T^{n_k + l_k} \omega_k) - f(T^{n_k + l_k} \omega^{\mathrm{disc}})|}
  {|T^{n_k + l_k} \omega_k - T^{n_k + l_k} \omega^{\mathrm{disc}}|}
   \leq \lim_{k \to \infty} \frac{1}{k} = 0.
 $$
This concludes the proof since $\tilde \omega$ was arbitrary.
\end{proof}

\begin{coro}
 For any $T$ not a rotation and satisfying the Keane condition, and
 $\lambda>0$, the Schr\"odinger operator with potential given by $V(n)=\lambda
 \cos (2\pi T^n(\omega))$ has $\sigma_{ac}(H_{\omega})=\emptyset$
\end{coro}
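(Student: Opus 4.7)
The strategy is to apply Theorem~\ref{max} after replacing $f(\omega) = \lambda\cos(2\pi\omega)$ by $\tilde f(\omega) = -\lambda\cos(2\pi\omega)$, and then to transfer the conclusion to $H_\omega$ via a simple unitary equivalence. The reason one cannot invoke Theorem~\ref{max} for $f$ itself is that $f$ does not have a non-degenerate maximum on $[0,1)$: the supremum $\lambda$ is approached both as $\omega \downarrow 0$ and as $\omega \uparrow 1$, so the defining condition \eqref{eq:condmax} fails at $\omega^{\mathrm{max}} = 0$.

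The function $\tilde f$, on the other hand, attains its maximum $\lambda$ only at $\omega^{\mathrm{max}} = 1/2 \in (0,1)$, and this maximum is non-degenerate: the inequality $\tilde f(\omega) \geq \lambda - \delta$ is equivalent to $\cos(2\pi\omega) \leq -1 + \delta/\lambda$, which confines $\omega$ to a neighborhood of $1/2$ whose size tends to $0$ with $\delta$. The derivative $\tilde f'(\omega) = 2\pi\lambda\sin(2\pi\omega)$ is continuous and bounded by $2\pi\lambda$. Since $T$ satisfies the Keane condition and its permutation is not of rotation class (because $T$ is not a rotation), Theorem~\ref{max} applies to $\tilde f$ and yields $\sigma_\mathrm{ac}(\tilde H_\omega) = \emptyset$ for every $\omega \in \Omega$, where $\tilde H_\omega$ denotes the Schr\"odinger operator with potential $\tilde V_\omega(n) = \tilde f(T^n\omega) = -\lambda\cos(2\pi T^n\omega)$.

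It remains to transfer this result to $H_\omega$. Define the unitary operator $U$ on $\ell^2(\Z)$ by $(U\psi)(n) = (-1)^n \psi(n)$. A direct computation, $(U\Delta U^*\psi)(n) = (-1)^n\bigl((-1)^{n+1}\psi(n+1) + (-1)^{n-1}\psi(n-1)\bigr) = -(\Delta\psi)(n)$, shows that $U$ conjugates the free Laplacian $\Delta$ to its negative while commuting with every diagonal multiplication operator; hence $U H_\omega U^* = -\tilde H_\omega$. Unitary equivalence preserves spectral type, so $\sigma_\mathrm{ac}(H_\omega) = -\sigma_\mathrm{ac}(\tilde H_\omega) = \emptyset$, as desired. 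The only real obstacle is the degeneracy of the maximum of $f$ on the half-open interval $[0,1)$, and it is bypassed by the sign change together with the elementary unitary equivalence between $H_\omega$ and $-\tilde H_\omega$.
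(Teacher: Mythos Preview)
Your argument is correct and rests on the same observation the paper makes: $\lambda\cos(2\pi\omega)$ fails to have a non-degenerate maximum on $[0,1)$ (the supremum is approached at both endpoints), but it does have a non-degenerate extremum at $\omega=1/2$. The paper simply notes that $\cos(2\pi\omega)$ has a non-degenerate \emph{minimum} there and asserts that the proof of Theorem~\ref{max} goes through verbatim with ``minimum'' in place of ``maximum.'' You instead flip the sign of $f$ to turn the minimum into a maximum, apply Theorem~\ref{max} as a black box to $\tilde f=-\lambda\cos(2\pi\,\cdot)$, and then transfer the conclusion back via the unitary $(U\psi)(n)=(-1)^n\psi(n)$, which conjugates $H_\omega$ to $-\tilde H_\omega$. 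Your route has the minor advantage that it cites Theorem~\ref{max} directly rather than re-running its proof for minima, at the (small) cost of the additional unitary-equivalence step; the underlying idea is the same.
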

\begin{proof}
 Note that though $\cos(2\pi x)$ does not have a non-degenerate
 maximum (as a function on $[0,1)$), it does have a non-degenerate minimum.
 The argument for this case is the same as in Theorem~\ref{max}.
\end{proof}

\section{Absence of Point Spectrum for Schr\"odinger Operators Generated
by Interval Exchange Transformations}\label{s.pp}

A bounded map $V : \Z \to \R$ is called a \textit{Gordon
potential} if there exists a sequence $q_k \to \infty$ such that
\be
 \limsup_{k\to\infty} \left( \sup_{0 \leq j < q_k} |V(j) - V(j \pm q_k)| \exp(C q_k) \right) = 0,
 \quad \omega\in\Omega_L(T)
\ee for any $C > 0$. It is a well-known fact that Schr\"odinger
operators with a Gordon potential have purely continuous spectrum;
see \cite{g} for the original Gordon result and \cite{d1} for the
result in the form stated above and the history of criteria of
this kind.

We will now identify a dense $G_\delta$ set of interval exchange
transformations such that they generate Gordon potentials for
suitable sampling functions. For the case of two intervals (i.e.,
rotations of the circle), this is a result due to Avron and Simon
\cite{as}. In analogy to the convention for rotations, we will refer
to these as \textit{Liouville} interval exchange transformations.
Explicitly, we call an interval exchange transformation $T$
Liouville if it satisfies the following property: There exists a
sequence $q_k \to \infty$ and $\Omega_L(T)\subseteq\mathbb{T}$ of
full measure such that \be
 \limsup_{k\to\infty} \left( \sup_{0 \leq j < q_k} |T^j \omega - T^{j \pm q_k} \omega| \exp(C q_k) \right) = 0,
 \quad \omega\in\Omega_L(T)
\ee
for any $C > 0$. We will show in the following that the set of Liouville
interval exchange transformation is a dense $G_\delta$ set of zero measure.

If $f: \mathbb{T} \to \R$ is H\"older continuous, the above
property implies that $V_\omega(n) = f(T^n\omega)$ is a Gordon
potential. It was shown by Chulaevskii in \cite{chu} that the set
of Liouville interval exchange transformations is dense and a
$G_\delta$ set. In fact, we state without proof the following
result, which clearly implies that Liouville interval exchange
transformations are dense and $G_\delta$:

\begin{lemma}\label{l.liou}
Given a decreasing function $f: \N \to (0,\infty)$, we can find a
dense $G_\delta$ set $U$ in $\Delta^r$ such that for every
interval exchange transformation in $U$ the following holds.

For almost every $\omega$, there exists a sequence $q_k =
q_k(\omega) \to \infty$ such that for $k \geq 1$,
 \be\label{eq:approp}
  \sup_{0 \leq j < q_k} | T^{j} \omega -  T^{j \pm q_k} \omega | \leq
  f(q_k).
 \ee
\end{lemma}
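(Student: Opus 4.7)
The plan is to build $U$ as a countable intersection of open dense subsets of $\Delta^r$ using rational (periodic) approximation, and then apply Borel--Cantelli to extract the almost sure sequence of return times. The key observation is that if $\Lambda_0 \in \Delta^r$ has rational coordinates with common denominator $Q$, then every discontinuity of $T_{\Lambda_0}$ lies in $\tfrac{1}{Q}\Z$, so $T_{\Lambda_0}$ permutes the $Q$ intervals $[k/Q,(k+1)/Q)$ by translation. Consequently there is a period $q_0 \le Q!$ with $T_{\Lambda_0}^{q_0}\omega = \omega$ for every $\omega$ outside the countable bi-infinite orbit of the discontinuities. In particular such $\Lambda_0$ are dense in $\Delta^r$ and yield IETs that are periodic of arbitrarily large period on a set of full measure.

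For a rational $\Lambda_0$ with period $q_0$, a perturbation $\Lambda = \Lambda_0 + \delta$ has iterates $T_\Lambda^{\,j}$, $|j| \le 2q_0$, satisfying $|T_\Lambda^{\,j}\omega - T_{\Lambda_0}^{\,j}\omega| \le Cq_0|\delta|$ off an open set $E_\Lambda^c$ consisting of small neighborhoods of the (finitely many) discontinuities of these iterates, with $\mathrm{Leb}(E_\Lambda^c) = O(q_0^2|\delta|)$. Combining with $T_{\Lambda_0}^{\,j \pm q_0} = T_{\Lambda_0}^{\,j}$ off the discontinuity orbit and the triangle inequality gives $|T_\Lambda^{\,j}\omega - T_\Lambda^{\,j \pm q_0}\omega| \le 2Cq_0|\delta|$ for $\omega \in E_\Lambda$ and $0 \le j < q_0$. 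Picking $|\delta|$ so small that $2Cq_0|\delta| < f(q_0)$ and $\mathrm{Leb}(E_\Lambda^c) < 2^{-N}$ exhibits $\Lambda$ as a member of the set $U_N$ of those $\Lambda' \in \Delta^r$ admitting some integer $q \ge N$ and some $E \subseteq [0,1)$ with $\mathrm{Leb}(E^c) < 2^{-N}$ on which $\sup_{0 \le j < q}|T_{\Lambda'}^{\,j}\omega - T_{\Lambda'}^{\,j \pm q}\omega| < f(q)$. Thus $U_N$ is dense; its openness follows from the strict inequality together with the joint continuity of $(\Lambda,\omega) \mapsto T_\Lambda^{\,j}\omega$ on any compactum avoiding the finitely many discontinuities of the $T_\Lambda^{\,j}$, $0 \le j \le 2q$ (replace $E$ by such a compactum, shrinking measure by at most $2^{-N-1}$).

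Finally, $U = \bigcap_N U_N$ is a dense $G_\delta$ in $\Delta^r$. For $\Lambda \in U$, let $(q_N, E_N)$ be witnesses from the definition of $U_N$. Since $\sum_N 2^{-N} < \infty$, Borel--Cantelli gives $\mathrm{Leb}(\limsup_N E_N^c) = 0$, so for almost every $\omega$ there is $N_0(\omega)$ with $\omega \in E_N$ for all $N \ge N_0(\omega)$; the associated sequence $\{q_N\}_{N \ge N_0(\omega)}$ tends to infinity and satisfies \eqref{eq:approp}. The main obstacle is the perturbation estimate in the middle paragraph: one must control simultaneously the measure of $E_\Lambda^c$ and the uniform bound on $E_\Lambda$ as $\Lambda$ varies near $\Lambda_0$, which forces $|\delta|$ to be chosen much smaller than $\min(1/q_0^2,\, f(q_0)/q_0)$. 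Since the periods $q_0$ attainable by rational IETs near a given $\Lambda$ are unbounded and the allowed $|\delta|$ is only constrained from above, the construction accommodates any prescribed decay $f$.
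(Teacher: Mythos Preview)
The paper does not actually prove this lemma: immediately before the statement it writes ``we state without proof the following result,'' attributes the idea to Chulaevskii, and afterwards only hints that one could work near the primitive (periodic) interval exchanges of Keane--Rauzy and Veech. So there is no proof in the paper to compare against, only a pointer to periodic approximation.

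Your argument is a correct fleshing-out of exactly that periodic-approximation idea, and it is the standard one. A rational $\Lambda_0$ gives a genuinely periodic IET (it permutes the $Q$ subintervals $[k/Q,(k+1)/Q)$, so $T_{\Lambda_0}^{q_0}=\mathrm{id}$ everywhere; your caveat about the discontinuity orbit is unnecessary). The perturbation bound $|T_\Lambda^{\,j}\omega - T_{\Lambda_0}^{\,j}\omega|\le Cq_0|\delta|$ off a set of measure $O(q_0^2|\delta|)$ is exactly right, since the translation amounts and breakpoints shift by $O(|\delta|)$ and errors accumulate linearly in $j$. The Baire/Borel--Cantelli packaging via the sets $U_N$ is clean and gives both the dense $G_\delta$ and the almost-sure sequence of good return times.

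Two places deserve one more sentence each if you write this up. First, you assert that rational IETs of \emph{arbitrarily large} period are dense; this is true for irreducible $\pi$ but needs a word: pick $\Lambda$ satisfying the Keane condition (a dense set), note $T_\Lambda$ is minimal hence has no periodic points, and use continuity in $\Lambda$ away from discontinuities to see that any sufficiently close rational $\Lambda_0$ must have period exceeding any prescribed $N$. Second, the openness of $U_N$ really does require replacing $E$ by a compact set avoiding the finitely many discontinuity hyperplanes of $T_\Lambda^{\,j}$, $|j|\le 2q$, before varying $\Lambda$; you gesture at this, and it works, but the order of quantifiers (fix $q$, fix compact $K\subset E$, \emph{then} perturb $\Lambda$) should be made explicit.
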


For the reader desiring further references, one could choose the
interval exchange transformations to be in a small neighborhood of
the primitive interval exchanges considered in \cite{metric} part
II. These were introduced in \cite{kr}. Thus, we have the
following theorem:

\begin{theorem}
If $T$ is a Liouville interval exchange transformation and $f$ is
H\"older continuous, then for every $\omega\in\Omega_L(T)$,
$H_\omega$ has no eigenvalues.
\end{theorem}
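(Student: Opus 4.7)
The plan is to show that for $\omega\in\Omega_L(T)$ and $f$ Hölder continuous, the potential $V_\omega(n)=f(T^n\omega)$ is a Gordon potential in the sense defined at the start of Section~\ref{s.pp}. Once that is established, the theorem quoted there (based on \cite{g,d1}) immediately yields that $H_\omega$ has purely continuous spectrum, which in particular rules out eigenvalues. So everything reduces to verifying a single quantitative estimate.

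Fix $\omega\in\Omega_L(T)$ and let $\{q_k\}$ be the sequence provided by the Liouville property of $T$. Suppose $f$ is $\alpha$-Hölder with $|f(x)-f(y)|\le C_f\,|x-y|^\alpha$ for all $x,y\in[0,1)$. Then for every $k\ge 1$ and every $0\le j<q_k$,
\[
|V_\omega(j)-V_\omega(j\pm q_k)|
= |f(T^j\omega)-f(T^{j\pm q_k}\omega)|
\le C_f\,|T^j\omega-T^{j\pm q_k}\omega|^\alpha.
\]
Given an arbitrary $C>0$, multiply by $\exp(Cq_k)$ and rewrite as
\[
|V_\omega(j)-V_\omega(j\pm q_k)|\exp(Cq_k)
\le C_f\left(|T^j\omega-T^{j\pm q_k}\omega|\,\exp((C/\alpha)q_k)\right)^\alpha.
\]
Taking the supremum over $0\le j<q_k$ and then $\limsup_{k\to\infty}$, the Liouville property applied with the positive constant $C/\alpha$ shows that the right-hand side tends to $0$. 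Since $C>0$ was arbitrary, $V_\omega$ is a Gordon potential, so $H_\omega$ has no eigenvalues by Gordon's criterion.

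There is no real obstacle here; the only substantive step is the change-of-constant $C\mapsto C/\alpha$ that absorbs the Hölder exponent into the exponential rate. This works precisely because the Liouville condition is required to hold \emph{for every} $C>0$ with one and the same sequence $q_k$. Had the definition only required the estimate for a single $C$, a Hölder hypothesis would not suffice and one would need to strengthen the sampling function's regularity (e.g.\ Lipschitz, together with a polynomial-versus-exponential tradeoff). Under the present formulation, however, the argument is a direct one-line reduction from Liouville to Gordon.
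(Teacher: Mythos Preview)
Your proof is correct and follows exactly the route the paper indicates: the paper asserts (without writing out the computation) that H\"older continuity of $f$ together with the Liouville property of $T$ implies $V_\omega$ is a Gordon potential, and then invokes Gordon's criterion. Your argument supplies precisely the missing one-line computation, including the change of constant $C\mapsto C/\alpha$ that makes the H\"older exponent harmless.
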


We remark that it is easy to show that the set of Liouville
interval exchange transformations has zero measure.

\section*{Acknowledgements}

We thank M.~Boshernitzan, M.~Embree, and S.~Ferenczi for useful
discussions. Corollary~\ref{cor:veech} was pointed out to us
by W.~Veech.
Furthermore, D.~D.\ and H.~K.\ wish to thank the
organizers of the workshop \textit{Low Complexity Dynamics} at the
Banff International Research Station in May~2008, where the work on
this paper was begun.


\begin{thebibliography}{xxx}

\bibitem{a} A.\ Avila, Absolutely continuous spectrum for the almost Mathieu operator with
subcritical coupling, Preprint.

\bibitem{af} A.\ Avila, G.\ Forni, Weak mixing for interval exchange transformations
and translation flows, \textit{Ann.\ of Math.}\ \textbf{165}
(2007), 637--664.

\bibitem{as} J.\ Avron and B.\ Simon, Singular continuous spectrum for a class of almost periodic
Jacobi matrices, \textit{Bull.\ Amer.\ Math.\ Soc.}\ \textbf{6}
(1982), 81--85.

\bibitem{rank2} M.\ Boshernitzan, Rank two interval exchange transformations,
\textit{Ergodic Theory Dynam.\ Systems} \textbf{8} (1988),
379--394.


\bibitem{bj} J.\ Bourgain, S.\ Jitomirskaya, Absolutely continuous spectrum for 1D quasiperiodic
operators, \textit{Invent.\ Math.} \textbf{148} (2002), 453--463.


\bibitem{chu} V.\ Chulaevskii, Cyclic approximations of interval exchange transformations,
\textit{Russ.\ Math.\ Surv.}\ \textbf{34} (1979), 221--222.

\bibitem{cn} J.\ Chaves, A.\ Nogueira, Spectral properties of interval exchange maps,
\textit{Monatsh.\ Math.}\ \textbf{134} (2001), 89--102.

\bibitem{d1} D.\ Damanik, Gordon-type arguments in the spectral theory of
one-dimensional quasicrystals, in \textit{Directions in
Mathematical Quasicrystals}, 277--305, CRM Monograph Series, 13,
American Mathematical Society, Providence, RI, 2000.

\bibitem{d} D. Damanik, Lyapunov exponents and spectral analysis of
ergodic Schr\"odinger operators: a survey of Kotani theory and its
applications, \textit{Spectral Theory and Mathematical Physics: A
Festschrift in Honor of Barry Simon's 60th Birthday}, 539--563,
Proc. Sympos. Pure Math., 76, Part 2, American Mathematical
Society, Providence, RI, 2007.

\bibitem{dk} D.\ Damanik, R.\ Killip, Ergodic potentials with a discontinuous sampling function are
non-deterministic, \textit{Math.\ Res.\ Lett.}\ \textbf{12}
(2005), 187--192.

\bibitem{dk2} D.\ Damanik, R.\ Killip, Almost everywhere positivity of the Lyapunov exponent for the
doubling map, \textit{Commun.\ Math.\ Phys.}\ \textbf{257} (2005),
287--290.

\bibitem{g} A.\ Gordon, On the point spectrum of the one-dimensional Schr\"odinger operator, \textit{Usp.\
Math.\ Nauk.}\ \textbf{31} (1976), 257-–258.

\bibitem{h} H.\ Hmili, Echanges d'intervalles non topologiquement faiblement
m\'elangeants, Preprint (arXiv:math/0609192).

\bibitem{j} S.\ Jitomirskaya, Ergodic Schr\"odinger operators (on one
foot), \textit{Spectral Theory and Mathematical Physics: A
Festschrift in Honor of Barry Simon's 60th Birthday}, 613--647,
Proc. Sympos. Pure Math., 76, Part 2, American Mathematical
Society, Providence, RI, 2007.


\bibitem{nomixing} A.\ Katok, Interval exchange transformations and some special flows are not mixing,
\textit{Israel J.\ Math.}\ \textbf{35} (1980), 301--310.

\bibitem{kr} M.\ Keane, G.\ Rauzy, Unique ergodicit\'e des \'exchanges d'intervalles, \textit{Math.\ Z.}\
\textbf{174} (1980), 203--212.

\bibitem{k1} S.\ Kotani, Ljapunov indices determine absolutely continuous spectra of stationary random
one-dimensional Schr\"odinger operators, \textit{Stochastic
Analysis} (\textit{Katata/Kyoto, 1982}), 225--247, North-Holland
Math.\ Library \textbf{32}, North-Holland, Amsterdam, 1984.

\bibitem{k2} S.\ Kotani, Support theorems for random Schr\"odinger operators, \textit{Commun.\
Math.\ Phys.} \textbf{97} (1985), 443--452.

\bibitem{k3} S.\ Kotani, One-dimensional random Schr\"odinger operators and Herglotz functions,
\textit{Probabilistic Methods in Mathematical Physics}
(\textit{Katata/Kyoto, 1985}),  219--250, Academic Press, Boston,
MA, 1987.

\bibitem{k4} S.\ Kotani, Jacobi matrices with random potentials taking finitely many values,
\textit{Rev.\ Math.\ Phys.}\ \textbf{1} (1989), 129--133.

\bibitem{k5} S.\ Kotani, Generalized Floquet theory for stationary Schr\"odinger operators in one
dimension, \textit{Chaos Solitons Fractals} {\bf 8} (1997),
1817--1854.

\bibitem{ks} H.\ Kunz, B.\ Souillard, Sur le spectre des op\'erateurs aux diff\'erences finies
al\'eatoires, \textit{Commun.\ Math.\ Phys.}\ \textbf{78}
(1980/81), 201--246.

\bibitem{ls} Y.\ Last, B.\ Simon, Eigenfunctions, transfer matrices, and absolutely continuous
spectrum of one-dimensional Schr\"odinger operators,
\textit{Invent.\ Math.}\ \textbf{135} (1999), 329--367.

\bibitem{m} H.\ Masur, Interval exchange transformations and measured foliations,
\textit{Ann.\ of Math.} \textbf{115} (1982), 168--200.

\bibitem{nr} A.\ Nogueira, D.\ Rudolph, Topological weak-mixing of interval exchange maps,
\textit{Ergodic Theory Dynam.\ Systems} \textbf{17} (1997),
1183--1209.

\bibitem{p} L.\ Pastur, Spectral properties of disordered systems in the one-body approximation,
\textit{Commun.\ Math.\ Phys.}\ \textbf{75} (1980), 179--196.

\bibitem{r} C.\ Remling, The absolutely continuous spectrum of Jacobi
matrices, Preprint (arXiv:0706.1101).

\bibitem{s} B.\ Simon, Equilibrium measures and capacities in spectral theory, \textit{Inverse Probl.\
Imaging} \textbf{1} (2007), 713--772.

\bibitem{v} W.\ Veech, Gauss measures for transformations on the space of
interval exchange maps, \textit{Ann.\ of Math.} {\bf 115} (1982),
201--242.

\bibitem{metric} W.\ Veech, The metric theory of interval exchange
transformations, \textit{Amer.\ J.\ Math.}\ \textbf{106} (1984),
1331--1422.

\bibitem{via} M.\ Viana, Ergodic theory of interval exchange maps,
\textit{Rev.\ Mat.\ Complut.}\ \textbf{19} (2006), 7--100.
\end{thebibliography}
\end{document}